
\documentclass[12pt,reqno]{amsart}

\headheight=6.15pt \textheight=8.75in \textwidth=6.5in
\oddsidemargin=0in \evensidemargin=0in \topmargin=0in

\usepackage{latexsym}
\usepackage{amssymb}

\renewcommand{\Re}{{\operatorname{Re}\,}}

\newcommand{\beq}{\begin{equation}}
\newcommand{\eeq}{\end{equation}}

\renewcommand{\epsilon}{\varepsilon}
\newcommand{\dist}{{\operatorname{dist}}}
\newcommand{\var}{{\bf Var}}

\newcommand{\szego}{Szeg\H{o} }

\newcommand{\inv}{^{-1}}
\newcommand{\kahler}{K\"ahler }
\newcommand{\sqrtk}{\sqrt{k}}
\newcommand{\wt}{\widetilde}

\newcommand{\PP}{{\mathbb P}}
\newcommand{\N}{{\mathbb N}}
\newcommand{\R}{{\mathbb R}}
\newcommand{\C}{{\mathbb C}}
\newcommand{\Q}{{\mathbb Q}}
\newcommand{\Z}{{\mathbb Z}}

\newcommand{\CP}{\C\PP}
\renewcommand{\d}{\partial}
\newcommand{\dbar}{\bar\partial}
\newcommand{\ddbar}{\partial\dbar}

\newcommand{\E}{{\mathbf E}}

\newcommand{\half}{{\textstyle \frac 12}}

\renewcommand{\phi}{\varphi}

\newcommand{\acal}{\mathcal{A}}

\newcommand{\ccal}{\mathcal{C}}
\newcommand{\dcal}{\mathcal{D}}

\newcommand{\hcal}{\mathcal{H}}
\newcommand{\ical}{\mathcal{I}}

\newcommand{\lcal}{\mathcal{L}}

\newcommand{\ncal}{\mathcal{N}}

\newcommand{\pcal}{\mathcal{P}}

\newcommand{\al}{\alpha}
\newcommand{\be}{\beta}
\newcommand{\ga}{\gamma}

\newcommand{\la}{\lambda}
\newcommand{\ep}{\varepsilon}

\newcommand{\om}{\omega}
\newcommand{\Om}{\Omega}

\newtheorem{theo}{{\sc Theorem}}[section]

\newtheorem{cor}[theo]{{\sc Corollary}}

\newtheorem{lem}[theo]{{\sc Lemma}}
\newtheorem{prop}[theo]{{\sc Proposition}}

\newenvironment{rem}{\medskip\noindent{\it Remark:\/} }{\medskip}
\newtheorem{defin}[theo]{{\sc Definition}}

\title[Variance of random zeros on complex manifolds]
{Asymptotic expansion of the variance of random zeros on complex manifolds}

\author{Bernard Shiffman}
\address{Department of Mathematics, Johns Hopkins University, Baltimore, MD
21218, USA} \email{shiffman@math.jhu.edu}

\thanks{Research  partially supported by NSF grant CCF-1640970.}


\begin{document}

\begin{abstract} 
Linear statistics of  random zero sets are  integrals of  smooth differential forms over the zero set and as such are smooth analogues of the  volume  of the random zero set inside a fixed domain.
We derive an asymptotic expansion for the variance of  linear statistics of the zero divisors of random holomorphic sections of powers of a positive line bundle on a compact \kahler manifold. This expansion sharpens the leading-order asymptotics (in the codimension one case) given by Shiffman--Zelditch in 2010.  \end{abstract}

\maketitle

\section{Introduction}

The distribution  of zeros of random polynomials has a long history going back to Bloch--Polya \cite{BP}, Kac \cite{Kac}, Littlewoood--Offord \cite{LO}, Maslova \cite{Ma} and others, who studied the zeros of random real polynomials with i.i.d.\  coefficients.  Zeros of random polynomials of one complex variable were studied by Hammersley \cite{Ham} in 1956 and in the 1990s by  Hannay \cite{Han} and others \cite{BBL, FH, NV} and in the 2000s in \cite{Bl,ST,SZeq}. In higher dimensions, zeros of $m$  simultaneous random polynomials in $\C^m$ were investigated by Edelman--Kostlan \cite{EK} in 1995 and more recently by \cite{Ba,BS, DMM, Zhu} and others.

Polynomials on $\C^m$ of degree $k$ can be identified with holomorphic sections of the $k$-th power of the hyperplane section bundle on $\CP^m$, and thus a natural generalization is to holomorphic sections of powers of a positive holomorphic line bundle on a compact \kahler manifold. The zeros of random sections of such line bundles form the subject of a series of  papers \cite{SZ,BSZ1,BSZ2,SZa,SZ5} based on the asymptotic expansion of the Bergman kernel for powers of a line bundle introduced by Catlin \cite{Cat} and  Zelditch \cite{Z} and further developed in \cite{BSZ2,LS} (see also \cite{Be,MM,Ti,Xuhao,Ze2}). Additional references for zeros of random polynomials and random holomorphic sections can be found in \cite{BCHM}. 

This article concerns the  fluctuations of the zero sets of random holomorphic sections. We study  the ``linear statistics" of random zero sets; these are the integrals of a smooth test form over the zero sets of random holomorphic sections. Linear statistics can be regarded as smooth approximations of ``discontinuous statistics" such as the volume (or cardinality in the point case) of a random zero set inside a fixed domain, as studied in \cite{SZa}. Linear statistics were studied by Forrester--Honner \cite{FH}, Sodin--Tsirelson \cite{ST} and Nazarov--Sodin \cite{NS} for functions  of one complex variable, by \cite{SZ5}  for holomorphic sections on compact \kahler manifolds, and by \cite{Ba} for polynomials in several complex variables.  We shall apply the methods of \cite{SZ5} to give a sharp asymptotic expansion (Theorem~\ref{sharp}) for the case of divisors (codimension 1), and we also apply the results of \cite{LS} to compute the sub-leading term of this expansion.

To describe our framework, we begin with  a positively curved
Hermitian holomorphic line bundle  $(L,h)\to M$ over a compact complex manifold
$M$ of dimension $m$. Recall that the curvature form $\Theta_h\in\dcal^{1,1}(M)$ of $L$ is given locally by 
$\Theta_h= -\ddbar\log h(e_L,\overline{e_L}) $, where $e_L$ is a nonvanishing local holomorphic section of $L$. We then give
$M$ the \kahler form $\om= \frac i2\Theta_h$, and we give the spaces $H^0(M,L^k)$ of holomorphic sections the Hermitian inner products induced by the \kahler form
$\om$ and the Hermitian metrics $h^k$:
\begin{equation}\label{inner}\langle s_1, \bar s_2 \rangle = \int_M h^k(s_1, \bar 
s_2)\,\frac 1{m!}\om^m\;,\qquad s_1, s_2 \in
H^0(M,L^k)\,.\end{equation}  

For a section $s^k\in H^0(M,L^k)$ (not identically zero), we let $Z_{s^k}\in\dcal'^{1,1}(M)$ denote the current of integration along the zero set (divisor) of $s^k$:
$$(Z_{s^k},\psi)= \int_{\{s^k=0\}}\psi,\qquad \psi\in\dcal^{m-1,m-1}(M)\,,$$ taking into account multiplicities. Here, $\dcal^{p,q}(M)$ denotes the space of smooth $(p,q)$-forms on the (compact) manifold $M$. In particular, if $\dim M=1$, then $Z_{s^k}= \sum_j n_j \delta_{a_j}$, where  $s^k$ has zeros of multiplicity $n_j$ at the point $a_j$. (In our setting, all multiplicities equal 1, almost surely.)

The following asymptotic expansion for the variance of linear statistics of random zeros is the main result of this article:
 
\begin{theo}\label{sharp} Let $(L,h)$ be a
positive Hermitian holomorphic line bundle over a compact \kahler
manifold $(M,\om)$ of dimension $m$, where $\om= \frac i2 \Theta_h$. We give the spaces $H^0(M,L^k)$ the standard Gaussian probability measures (see \eqref{standard}) corresponding to the inner products \eqref{inner}.

Let  $\psi\in \dcal^{m-1,m-1}_\R(M)$ be a real $\ccal^\infty$ $(m-1,m-1)$-form on $M$. Then for random sections $s^k\in H^0(M,L^k)$, the variances $\var\big(Z_{s^k},\psi\big)$ have an asymptotic expansion
\beq\var\big(Z_{s^k},\psi\big) \sim A_0k^{-m}+A_1 k^{-m-1} + \cdots +A_jk^{-m-j}+ \cdots.\label{expansion}\eeq The leading and sub-leading coefficients are given by \begin{eqnarray}\label{A0}A_0&=&\frac{\pi^{m-2}\zeta(m+2)}4\,\|\ddbar\psi\|_{2}^2\,,\\A_1&=& 
 -\pi^{m-2}\zeta(m+3)\left\{\frac 18\int_M\rho|\ddbar\psi|^2\Omega_M+\frac14\|\d^*\ddbar\psi\|_{2} ^2\right\},\label{A1}\end{eqnarray} where $\rho$ is the scalar curvature and $\Om_M=\frac 1{m!}\om^m$ is the volume form on $M$.
\end{theo}

In fact, all the coefficients $A_j$ can be given in terms of $\ddbar\psi$,  curvature invariants of $M$,  and their derivatives; see Theorem~\ref{general}.

Theorem \ref{sharp} sharpens the result in \cite{SZ5}, where it was shown that
$$\var\big(Z_{s^k},\psi\big) = A_0 k^{-m}+
 O(k^{-m-\frac 12 +\ep})\;.$$ Similar results for the variance in higher codimension were  also given in \cite{SZ5}.

In particular, for $\dim M=1$ we have:
\begin{cor} Let $M$ be a compact Riemann surface and let $(L,h)\to (M,\om)$ be as in Theorem \ref{sharp}. Then for  $\psi\in\ccal^\infty_\R(M)$,  
\beq\var\left(Z_{s^k},\psi\right)=\frac {\zeta(3)}{16\pi}\|\Delta\psi\|^2k\inv -\frac{\pi^3}{2880}\left\{\int_M\rho|\Delta\psi|^2\,\omega+{ \frac 12}\|d\Delta\psi\|_{2} ^2\right\}k^{-2}+O(k^{-3}).\label{sharpRiemann}\eeq\end{cor}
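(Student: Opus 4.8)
The plan is to obtain \eqref{sharpRiemann} as a direct specialization of Theorem~\ref{sharp} to $m=1$, where the only genuine work is to translate the three complex-geometric quantities $\|\ddbar\psi\|_2^2$, $\int_M\rho|\ddbar\psi|^2\Om_M$ and $\|\d^*\ddbar\psi\|_2^2$ appearing in \eqref{A0}--\eqref{A1} into the Riemannian quantities $\|\Delta\psi\|^2$, $\int_M\rho|\Delta\psi|^2\om$ and $\|d\Delta\psi\|_2^2$. I would first record the arithmetic substitutions: for $m=1$ the test form $\psi$ is a real function, $\pi^{m-2}=\pi^{-1}$, $\zeta(m+2)=\zeta(3)$, and $\zeta(m+3)=\zeta(4)=\pi^4/90$, and the volume form is $\Om_M=\om$. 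The decisive simplification available only in complex dimension one is that the space of $(1,1)$-forms is one-dimensional at each point, so every $(1,1)$-form is a function times $\om$; in particular $\ddbar\psi=(\Lambda\ddbar\psi)\,\om$, which is what makes both the pointwise norm $|\ddbar\psi|$ and the $(0,1)$-form $\d^*\ddbar\psi$ collapse to expressions in $\Delta\psi$ alone.

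For the leading coefficient I would use the standard relation between the complex and Riemannian Laplacians on a \kahler curve, $i\ddbar\psi=\tfrac12(\Delta\psi)\,\om$, together with $|\om|=1$ (the pointwise norm of $\om$ equals $\sqrt m$), to get the pointwise identity $|\ddbar\psi|^2=\tfrac14|\Delta\psi|^2$. Integrating against $\Om_M=\om$ gives $\|\ddbar\psi\|_2^2=\tfrac14\|\Delta\psi\|^2$, so $A_0=\tfrac{\zeta(3)}{4\pi}\cdot\tfrac14\|\Delta\psi\|^2=\tfrac{\zeta(3)}{16\pi}\|\Delta\psi\|^2$, matching \eqref{sharpRiemann}. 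The same pointwise identity disposes of the curvature term of $A_1$: $\tfrac18\int_M\rho|\ddbar\psi|^2\Om_M=\tfrac{1}{32}\int_M\rho|\Delta\psi|^2\om$, and since $\pi^{-1}\zeta(4)=\pi^3/90$ and $\tfrac1{90}\cdot\tfrac1{32}=\tfrac1{2880}$, this contributes exactly the term $-\tfrac{\pi^3}{2880}\int_M\rho|\Delta\psi|^2\om$.

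The remaining term requires computing $\d^*\ddbar\psi$. I would use the \kahler identity $[\Lambda,\dbar]=-i\,\d^*$, i.e.\ $\d^*=i(\Lambda\dbar-\dbar\Lambda)$: since $\dbar\,\ddbar\psi=0$ (there are no $(1,2)$-forms on a curve) and $\Lambda\ddbar\psi=-\tfrac i2\Delta\psi$, this yields $\d^*\ddbar\psi=-i\,\dbar(\Lambda\ddbar\psi)=-\tfrac12\,\dbar(\Delta\psi)$, hence $|\d^*\ddbar\psi|^2=\tfrac14|\dbar\Delta\psi|^2$. The last step is to pass from the $\dbar$-norm to the real norm via the pointwise relation $|\dbar f|^2=c\,|df|^2$ for real $f$, and to combine with $\tfrac14\|\d^*\ddbar\psi\|_2^2$ and the prefactor $\pi^{-1}\zeta(4)=\pi^3/90$ to land on the stated $-\tfrac{\pi^3}{2880}\cdot\tfrac12\|d\Delta\psi\|_2^2$. \textbf{The main obstacle is purely bookkeeping:} every relation used above ($i\ddbar\psi=\tfrac12(\Delta\psi)\om$, the value of $|\om|$, the normalization of $\d^*$ in the \kahler identity, and the conversion of $|\dbar f|$ into $|df|$) carries a factor of $2$ coming from the interplay between the complex coframe $(dz,d\bar z)$ and the real coframe $(dx,dy)$. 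The entire content of the corollary, beyond invoking Theorem~\ref{sharp}, is to fix one set of conventions and track these factors of $2$ consistently so that the coefficients emerge as exactly $\tfrac{\zeta(3)}{16\pi}$ and $-\tfrac{\pi^3}{2880}$; I would verify them in a flat local model $\om=dx\wedge dy$ to be sure no stray factor survives.
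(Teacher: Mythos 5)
Your overall route is the intended one: the paper gives no separate argument for this corollary, which is simply Theorem~\ref{sharp} specialized to $m=1$ together with the dictionary between the Hermitian quantities $\|\ddbar\psi\|_2^2$, $\int_M\rho|\ddbar\psi|^2\Om_M$, $\|\d^*\ddbar\psi\|_2^2$ and their Riemannian counterparts. Your treatment of the first two is complete and correct: with $i\ddbar\psi=\tfrac12(\Delta\psi)\,\om$ one gets the pointwise identity $|\ddbar\psi|^2=\tfrac14|\Delta\psi|^2$, which yields both $A_0=\tfrac{\zeta(3)}{16\pi}\|\Delta\psi\|^2$ and the coefficient $\tfrac{\pi^3}{2880}$ of $\int_M\rho|\Delta\psi|^2\,\om$.

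The gap is exactly where you flag it, and it is not a formality. For the last term you stop at $|\d^*\ddbar\psi|^2=\tfrac14|\dbar\Delta\psi|^2$ and then invoke an undetermined constant $c$ in $|\dbar g|^2=c\,|dg|^2$, chosen so as ``to land on'' the stated coefficient; that is reverse-engineering, not a derivation. To produce $-\tfrac{\pi^3}{2880}\cdot\tfrac12\|d\Delta\psi\|_2^2$ you need $c=\tfrac14$, i.e.\ $\|\d^*\ddbar\psi\|_2^2=\tfrac1{16}\|d\Delta\psi\|_2^2$. But the normalization forced by the rest of your own argument ($\om=dx\wedge dy$ at the center of normal coordinates, $|dx|=|dy|=1$, hence $|d\bar z|^2=2$ and $g_{\bar z}=\tfrac12(g_x+ig_y)$ for real $g$) gives $|\dbar g|^2=2|g_{\bar z}|^2=\tfrac12|dg|^2$, i.e.\ $c=\tfrac12$, whence $\|\d^*\ddbar\psi\|_2^2=\tfrac18\|d\Delta\psi\|_2^2$ and the coefficient $-\tfrac{\pi^3}{2880}\|d\Delta\psi\|_2^2$ \emph{without} the extra $\tfrac12$. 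So the one constant you declined to compute is precisely the one on which the stated formula turns. You must either exhibit a consistent convention under which $c=\tfrac14$ (the cleanest way to settle it is through the paper's own identities \eqref{f^2}--\eqref{3rd}, which reduce everything to $\|\dbar f\|_2^2$ for $f=*\,i\ddbar\psi=\tfrac12\Delta\psi$, evaluated in your flat local model) or conclude that the displayed $\tfrac12$ reflects a normalization discrepancy. Until that single computation is actually carried out rather than asserted, the corollary is not proved.
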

The leading term of \eqref{sharpRiemann} was obtained by Sodin and Tsirelson \cite{ST} for the cases where $M$ is $\CP^1$, the hyperbolic disk, and $\C^1$, where it was shown  that
$\var\left(Z_{s^k},\psi\right)=\frac {\zeta(3)}{16\pi}\|\Delta\psi\|^2k\inv+o(k\inv)$.

\section{Background}

We summarize here the notation and results that are used in this paper. 
We recall that the standard Gaussian measure $\ga_V$ on an $n$-dimensional complex vector space $V$  with a Hermitian inner product  is given by \beq\label{standard}d\ga_V(v)=\prod_{j=1}^n\left(\frac i{2\pi}e^{-|c_j|^2}  dc_j\wedge d\bar c_j\right)=\frac 1{\pi^n}e^{-\sum_{j=1}^n|c_j|^2} \prod_{j=1}^n \frac i2dc_j\wedge d\bar c_j\,,\quad v=\sum_{j=1}^n c_jv_j\in V,\eeq where $\{v_1,\dots, v_n\}$ is an orthonormal basis of $V$.  We let  $\E$  denote the  expected value of a  (real or vector-space valued) random variable, and we let $\var$ denote the variance of a (real) random variable.

Throughout this paper, $(L,h)\to M$ denotes a positive Hermitian holomorphic line bundle  over an $m$-dimensional compact \kahler manifold $M$ with \kahler form $\om= \frac i2\Theta_h$. We give the spaces $H^0(M,L^k)$ of holomorphic sections of $L^k=L^{\otimes k}$ the inner products given by \eqref{inner} together with  their corresponding standard Gaussian probability  measures.

Let $\{S_1^k,S_2^k,\dots S_{n_k}^k\}$ be a basis for $H^0(M,L^k)$,  $k\ge 1$. The reproducing kernel $K_k(z,w)$ of the orthogonal projection $K_k:\lcal^2(M,L^k)\to H^0(M,L^k)$ is given by the Bergman kernel
\beq\label{Bergman} K_k(z,w)= \sum_{j=1}^{n_k}S_j^k(z)\otimes \overline{S_j^k(w)}=\E\left(s^k(z)\otimes \overline{s^k(w)}\right)\in L^k_z\otimes \overline{L^k_w}\,.\eeq
The Bergman projection $K_k$ lifts to an orthogonal projection $\Pi_k:\lcal^2(X)\to \hcal^2_k(X)$, where   $X=\{\la\in L^*:\|\la\|=1\}$ is the circle bundle  of the dual line bundle $L^*$, and  $\hcal^2_k(X)= \{f\in \hcal^2(X): f(e^{i\theta}x)= e^{ik\theta}f(x)\}$, where $\hcal^2(X)$ is the Hardy space of CR-holomorphic $\lcal^2$ functions on $X$ (see \cite{Z,SZ2,SZa}). The reproducing kernel for $\Pi_k$ is the \szego kernel $\Pi_k\left((z,\theta_1),(w,\theta_2)\right) =e^{i(\theta_1-\theta_2)}\Pi_k(z,w)$ with norm given by
$$|\Pi_k(z,w)|= \|K_k(z,w)\|_{h^k(z)\otimes h^k(w)}\,.$$

It was shown in \cite{SZ} using the Poincar\'e--Lelong formula, \beq\label{PL} Z_{s^k}= \frac i{2\pi} \ddbar \log\|s^k\|^2_{h^k} +\frac k\pi\om\,,\eeq  that the expected value of the zero current of a Gaussian random section $s^k\in H^0(M,L^k)$ is given by
\beq\label{probPL} \E Z_{s^k}=\frac i{2\pi} \ddbar \log \Pi_k(z,z) +\frac k\pi \om\,.\eeq

On the other hand, the variance of $Z_{s^k}$ depends on the   {\it normalized \szego kernel\/}  introduced in \cite{SZa}:
\begin{equation}\label{Pk} P_k(z,w)=
\frac{|\Pi_k(z,w)|}{\Pi_k(z,z)^\frac 12 \Pi_k(w,w)^\frac
12}\;.\end{equation} We note that $P_k(z,z)=1$ and  $0\le P_k(z,w)\le 1$ by Cauchy-Schwarz.
The following variance formula from \cite{SZa} involves the normalized kernel $P_k(z,w)$ together with the function 
\begin{equation}\label{G}  G(t):= -\frac 1{4\pi^2}
\int_0^{t} \frac{\log(1-s)}{s}\,ds\ =\ \frac 1{4\pi^2}
\sum_{n=1}^\infty\frac{t^{n}}{n^2}\;,
\qquad -1\le t\le 1.\end{equation}

\begin{theo} \label{bipot} {\rm \cite[Theorem 3.1]{SZa}} Let $(L,h)\to(M,\om)$ be as in
Theorem
\ref{sharp}. Let $Q_k:M\times M\to [0,+\infty)$ be
the function given  by
\begin{equation}
\label{Qk} Q_k(z,w)=  G\left(P_k(z,w)^2\right) \,. \end{equation}  
Then the variance of the linear statistics $(Z_{s_k},\psi)$ of a random section $s_k\in H^0(M,L^k)$ is given by \begin{equation} \label{varint1} \var( Z_{s^k},\psi)= \int_{M\times
M} Q_k(z,w)\,i\ddbar \psi(z)\wedge i\ddbar \psi(w)
\;,\end{equation} for all real $(m-1,m-1)$-forms $\psi$ on $M$
with $\ccal^2$ coefficients.\end{theo}

\begin{rem} The function $\wt G(t)$ in \cite{SZa} equals $G(t^2)$ here. Although Theorem~\ref{bipot} involves $G(t)$ only  for $0\le t\le 1$, we allow $t$ to take negative values in the proof of Theorem~\ref{sharp}.\end{rem}

To obtain the asymptotic formula of Theorem~\ref{sharp}, we shall apply the asymptotics of the Bergman--\szego kernel to formula \eqref{varint1}.  To do this, we must overcome the fact that $G(t)$ is not differentiable at $t=P_k(z,z)=1$.

As in \cite{SZa,SZ5}, we shall apply the off-diagonal
asymptotics  of $\Pi_k(z,w)$ and $P_k(z,w)$ from \cite{SZ2,SZa}.   We recall that the \szego kernel $\Pi_k$ decays rapidly away from the diagonal as $k$ increases:

\begin{prop}\label{DPdecay} {\rm \cite[Prop.~2.6]{SZa}} Let $(L,h)\to
(M,\om)$ be as in Theorem
\ref{sharp}, and let $p>0$. 
we have
$$ P_k(z,w)=O(k^{-p})\qquad \mbox{uniformly for }\ \dist(z,w)\ge
\left[(2p+1)\frac {\log k}{k}\right]^{1/2} \;.$$
\end{prop}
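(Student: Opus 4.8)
The plan is to prove the estimate by comparing the off-diagonal values $|\Pi_k(z,w)|$ against the diagonal values, which satisfy $\Pi_k(z,z)\sim \pi^{-m}k^m$, and to split the pair $(z,w)$ into a near-diagonal regime and a far-off-diagonal regime. Writing $b_k=[(2p+1)\log k/k]^{1/2}$ for the threshold distance, I would fix a small $\de>0$ and treat separately the cases $b_k\le \dist(z,w)\le \de$ and $\dist(z,w)\ge \de$. Since $P_k(z,w)=|\Pi_k(z,w)|/[\Pi_k(z,z)\,\Pi_k(w,w)]^{1/2}$ and $0\le P_k\le 1$ everywhere, it suffices to bound the numerator relative to the diagonal growth in each regime.

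In the far regime $\dist(z,w)\ge\de$, I would invoke the rapid off-diagonal decay of the \szego kernel: away from any fixed neighborhood of the diagonal, $|\Pi_k(z,w)|$ is $O(k^{-N})$ for every $N$, uniformly, while $\Pi_k(z,z)$ and $\Pi_k(w,w)$ grow like $k^m$. Hence $P_k(z,w)=O(k^{-\infty})$ there, which is far stronger than the claimed $O(k^{-p})$, so this regime contributes nothing essential.

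The essential case is the near-diagonal regime, where I would apply the near-diagonal scaling asymptotics of the \szego kernel from \cite{SZ2} (see also \cite{BSZ2}). In Heisenberg normal coordinates centered at a point, the leading term of $\pi^m k^{-m}\Pi_k(z,w)$ has modulus $e^{-\frac12|u-v|^2}$, where $u,v$ are the scaled coordinates of $z,w$ and $|u-v|=\sqrt k\,\dist(z,w)\,(1+o(1))$; combined with the diagonal expansion $\Pi_k(z,z)=\pi^{-m}k^m(1+O(k^{-1}))$, this yields a bound of the form $P_k(z,w)\le C\exp\!\big(-\tfrac12(1-o(1))\,k\,\dist(z,w)^2\big)$, uniformly for $\dist(z,w)\le\de$. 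Substituting $\dist(z,w)^2\ge (2p+1)\log k/k$ gives $P_k(z,w)\le C\,k^{-(p+\frac12)(1-o(1))}=O(k^{-p})$. The crucial point is that the sharp Gaussian constant $\tfrac12$, together with the factor $(2p+1)$ built into the threshold, produces the extra half power $k^{-1/2}$ that absorbs the $o(1)$ loss in the exponent.

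The main obstacle I anticipate is establishing this Gaussian bound uniformly across the full near-diagonal range, because at the threshold the scaled distance $\sqrt k\,\dist(z,w)$ is of order $\sqrt{\log k}$ and hence unbounded, whereas scaling asymptotics are most naturally stated for bounded scaled variables. To handle this I would need a version of the near-diagonal expansion whose remainder is controlled by a fixed power of the scaled variables times the Gaussian, so that the factor $e^{-\frac12 k\,\dist^2}$ dominates the polynomial-in-$\sqrt{\log k}$ error; alternatively one can bootstrap from a crude global sub-Gaussian decay estimate on all of $M\times M$ to control the transition zone $b_k\le\dist(z,w)\le\de$. Ensuring the constant in the exponent is asymptotically the sharp $\tfrac12$, with only an $o(1)$ loss as $k\to\infty$, is precisely what makes the threshold $(2p+1)\log k/k$ work, and is where the estimate must be carried out with care.
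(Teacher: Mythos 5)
This proposition is not proved in the paper at all: it is quoted verbatim from \cite[Prop.~2.6]{SZa}, so there is no in-paper argument to compare against. Measured against the proof in that reference, your strategy is essentially the standard (and correct) one: split at a fixed $\de$, dispose of $\dist(z,w)\ge\de$ by the $O(k^{-\infty})$ far-off-diagonal decay of the \szego kernel relative to the diagonal growth $\Pi_k(z,z)\sim(k/\pi)^m$, and in the remaining region use the Gaussian off-diagonal profile $|e^{u\cdot\bar v-\frac12(|u|^2+|v|^2)}|=e^{-\frac12|u-v|^2}$ to get $P_k(z,w)\lesssim e^{-\frac12(1-o(1))k\,\dist(z,w)^2}$, so that the threshold $(2p+1)\log k/k$ yields $k^{-(p+\frac12)(1-o(1))}=O(k^{-p})$. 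Your observation that the factor $(2p+1)$ rather than $2p$ is there precisely to donate a spare half-power that absorbs the $o(1)$ loss in the exponent is exactly the point of the statement.

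The one place your write-up does not close the argument is the one you yourself flag: the transition zone $[(2p+1)\log k/k]^{1/2}\le\dist(z,w)\le\de$. The rescaled near-diagonal expansion in the form used in this paper (Theorem~\ref{near}) is only valid for $|u|+|v|<b\sqrt{\log k}$, i.e.\ for $\dist(z,w)=O(\sqrt{\log k/k})$, and no fixed choice of $b$ reaches up to a fixed $\de$; likewise the Agmon-type bound $P_k=O(e^{-c\sqrt k\,\dist(z,w)})$ quoted in the Remark after Corollary~\ref{Qdecay} only gives $e^{-c'\sqrt{\log k}}$ at the threshold, which is far weaker than $k^{-p}$. What actually closes the gap in \cite{SZa} is a Gaussian upper bound $P_k(z,w)\le Ce^{-\frac12(1-C\de)k\,\dist(z,w)^2}+O(k^{-\infty})$ valid uniformly on the whole fixed neighborhood $\{\dist(z,w)\le\de\}$, extracted from the Boutet de Monvel--Sj\"ostrand parametrix (the imaginary part of whose phase is bounded below by $\frac12\dist(z,w)^2(1-O(\de))$), with $\de$ then chosen small enough that $(p+\frac12)(1-C\de)\ge p$. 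This is exactly the ``crude global sub-Gaussian decay estimate'' you say you would need; so your proposal identifies the crux correctly but leaves its proof as an acknowledged black box rather than supplying it.
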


\begin{cor}\label{Qdecay} {\rm \cite[Lemma~3.4]{SZa}} Under the hypotheses of Proposition \ref{DPdecay}, we have $$ Q_k(z,w)=O(k^{-p})\qquad \mbox{uniformly for }\ \dist(z,w)\ge
\left[(p+1)\frac {\log k}{k}\right]^{1/2} \;.$$
\end{cor}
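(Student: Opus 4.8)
The plan is to deduce the decay of $Q_k$ from the already-established decay of $P_k$ in Proposition~\ref{DPdecay}, using only the elementary fact that $G$ vanishes to first order at the origin. Since $G(t)=\frac 1{4\pi^2}\sum_{n=1}^\infty\frac{t^n}{n^2}$ and $0\le P_k(z,w)^2\le 1$, I would first record a linear bound on $G$ for $0\le t\le 1$: because $t^n\le t$ for every $n\ge 1$ in this range,
$$G(t)\le \frac{t}{4\pi^2}\sum_{n=1}^\infty\frac 1{n^2}=\frac{\zeta(2)}{4\pi^2}\,t=\frac{t}{24}\,.$$
Evaluating at $t=P_k(z,w)^2$ gives $Q_k(z,w)=G\big(P_k(z,w)^2\big)\le \frac 1{24}\,P_k(z,w)^2$, so up to a constant $Q_k$ is controlled by the square of $P_k$.

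The remaining step is bookkeeping on the exponent, and it is exactly here that the quadratic vanishing of $Q_k$ in $P_k$ buys the improved distance threshold. To force $P_k^2=O(k^{-p})$, I would apply Proposition~\ref{DPdecay} not with exponent $p$ but with exponent $p/2$ (still positive). The proposition then yields $P_k(z,w)=O(k^{-p/2})$ uniformly on the region
$$\dist(z,w)\ge \Big[\big(2\cdot{\textstyle\frac p2}+1\big)\frac{\log k}{k}\Big]^{1/2}=\Big[(p+1)\frac{\log k}{k}\Big]^{1/2}\,.$$
Squaring gives $P_k(z,w)^2=O(k^{-p})$ on precisely this region, and combining with $Q_k\le \frac 1{24}P_k^2$ yields $Q_k(z,w)=O(k^{-p})$ uniformly for $\dist(z,w)\ge \big[(p+1)\frac{\log k}{k}\big]^{1/2}$, which is the assertion of the corollary.

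There is no genuine obstacle in this argument; the single point requiring care is the halving of the exponent, which is what converts the threshold $(2p+1)\log k/k$ of Proposition~\ref{DPdecay} into the sharper threshold $(p+1)\log k/k$. Intuitively, the bound $G(t)=O(t)$ near $t=0$ trades the extra factor of $P_k$ (since $Q_k$ is quadratic in $P_k$) for a gain in the size of the admissible region, so the only thing to verify is that the arithmetic $2(p/2)+1=p+1$ matches the stated range exactly.
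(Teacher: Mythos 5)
Your proof is correct and is precisely the argument the paper compresses into one line: the bound $G(t)=O(t)$ on $[0,1]$ gives $Q_k\le \frac1{24}P_k^2$, and applying Proposition~\ref{DPdecay} with exponent $p/2$ turns the threshold $(2\cdot\frac p2+1)\frac{\log k}{k}=(p+1)\frac{\log k}{k}$ into exactly the stated range. No further comment is needed.
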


\begin{proof} The bound follows immediately from \eqref{Qk}, Proposition \ref{DPdecay}, and the fact that $G(t)=O(t)$ for $0\le t\le 1$. \end{proof}

\begin{rem} The normalized \szego kernel satisfies the sharper decay rate away from the diagonal: $$P_k(z,w)=O(e^{-c\sqrtk\, \dist(z,w)}).$$
(See, for example, \cite[Th.~2.5]{B} and \cite{Li}.)  However, Proposition \ref{DPdecay} suffices for our purposes.\end{rem}

Corollary \ref{Qdecay} allows us to replace the integral in \eqref{varint1} with an integral over a small shrinking neighborhood of the diagonal, as we shall demonstrate below.  We then shall apply the ``near-diagonal" asymptotic expansion of the \szego kernel from \cite{SZ2,LS}. To give the asymptotic expansion in a neighborhood $U$ of a point $z_0\in M$, we let  $\Phi_{z_0}:U\buildrel{\approx}\over\to U'\subset\C^m$ be a local coordinate chart with $\Phi_{z_0}(z_0)=0$ and we write, by abuse of notation, $$z_0+u \equiv\Phi_{z_0}\inv(u)\in U\,,\quad\mbox{for }\  u\in U'\,.$$

\begin{theo} \label{near}{\rm \cite[Theorem~3.1]{SZ2}} Let $(L,h)\to (M,\om)$ be as in Theorem~\ref{sharp}, and let $z_0\in M$ and  $\ep,b\in\R^+$. Then using normal coordinates at $z_0$,
$$\Pi_k\left(z_0+\frac{u}{\sqrtk},z_0+\frac{v}{\sqrtk}\right)= \frac{k^m}{\pi^m}e^{u\cdot \bar v-\frac12(|u|^2+|v|^2)}\left[1+ \sum_{r = 1}^{n} k^{-\frac r2}
p_{r}(u,v) + k^{-\frac{(n +1)}2} E_{kn}(u,v)\right]\;,$$
where the $p_r$ are polynomials in $(u,\bar u, v,\bar v)$ (depending on $z_0$ and the choice of coordinates) of degree $\le 5r$ and of the same parity as $r$, and
\beq\label{est96}\|D^jE_{kn}(u,v)\|\le C_{jn\ep b}k^{\ep}\quad \mbox{for }\
|u|+|v|<b\sqrt{\log k}\,,\eeq for $j,n\ge 0$. 
Furthermore, if one chooses smoothly varying  coordinate charts $\Phi_z$ for $z$ varying in a closed domain $\overline V$, then
the constants  $C_{jn\ep b}$ can be chosen independently of $z_0\in\overline V$. 
\end{theo}

Here $\|D^jF(u,v)\|$ denotes the sum of the norms of  the derivatives of $F$ of order $j$ with respect to  $u,\bar u,v,\bar v$.  The estimate \eqref{est96} is equivalent to equation
(96) in \cite{SZ2}, where the result was shown to hold for
almost-complex symplectic manifolds. (A short proof of Theorem~\ref{near} is given in \cite[\S 5]{SZa};  see \cite{BBS, HKSX} for alternative approaches. For real-analytic metrics, the expansion holds for $|u|+|v| < \ep k^{1/4}$; see  \cite{HLX}.)

We assume henceforth that $M$ is covered by a finite collection of open sets $V$ such that there are smoothly varying coordinate charts $\Phi_z$ for $z\in \overline V$. This will guarantee uniformity of remainder estimates for $z_0\in M$.

Although the terms of the expansion in Theorem~\ref{near} depend on the choice of coordinates, if one uses specific coordinates introduced by Bochner \cite{Bo}, then the terms of the expansion are well described in terms of curvature invariants, as was given in \cite{LS}. In particular, choosing a  nonvanishing holomorphic section $e_L$ of $L$ over a neighborhood of $z_0$, we let $\phi=-\log h(e_L,\overline{e_L})$ so that $\phi$ is the \kahler potential:
\beq\label{phi} \om = \frac i2\ddbar\phi = \frac i2 \sum_{p,q=1}^m g_{p\bar q}dz^p\wedge d \bar z^{\bar q}\,, \qquad g_{p\bar q}= \frac{\d^2\phi}{\d z^p\d \bar z^{\bar q}}\,.\eeq
We recall that the components of the curvature tensor
are given by 
\begin{equation}\label{Rijkl}
R_{p\bar qj\bar l}=-\frac{\d ^2 g_{p\bar q}}{\d  z^j\d  \bar z^{l}}+\sum_{r,s=1}^mg^{r\bar s}\,\frac{\d  g_{p\bar s}}{\d  z^j}\,\frac{\d  g_{r\bar q}}{\d \bar  z^{l}}\,.
\end{equation}  The scalar curvature $\rho$ is given by   \beq\rho=\sum_{p,q,j,l}g^{p\bar q}g^{j\bar l}R_{p\bar qj\bar l}\,. \eeq
We write $$R_{z_0}(u,\bar v,u,\bar v)=\sum_{p,q,j,l}R_{p\bar qj\bar l}(z_0)\,u^p\bar v^q u^j\bar v^l\,,$$ for  $u=(u^1,\dots, u^m)$, $v=(v^1,\dots, v^m)$.

We shall use the following result from \cite{LS} to obtain the expressions for $A_0$ and $A_1$ in \eqref{A0}--\eqref{A1}. 

\begin{theo} \label{p1p2}{\rm \cite[Theorem~2.3]{LS}} Assuming the hypotheses of Theorem~\ref{near}, suppose that the   local holomorphic section $e_L$ and local coordinates at $z_0$ are chosen   such that the \kahler potential $\phi=-\log h(e_L,\overline{e_L})$ is of the form
\begin{equation} \label{Korder4}\phi(z_0+z) = |z|^2 + \frac 14 \sum_{p,q,j,l}\frac{\d^4\phi}{\d z^p\d z^j\d\bar z^q\d\bar z^l}(z^0) \,z^pz^j\bar z^q \bar z^l + O(|z|^5)\,.\end{equation} 
Then \begin{eqnarray}\label{p1} p_1(u,v) &=&0\,, \\ \label{p2} p_2(u,v)&=&\frac12\rho(z_0)+\frac 18 R_{z_0}(u,\bar u,u,\bar u) + \frac 18 R_{z_0}(v,\bar v,v,\bar v) -\frac 14 R_{z_0}(u,\bar v,u,\bar v)\,.\end{eqnarray}\end{theo}
Note that \eqref{Korder4} implies that the coordinates are normal coordinates at $z_0$.

\section{The leading and sub-leading terms}\label{begin}

We shall prove in this section  the following weaker form of Theorem \ref{sharp}:

\begin{prop} \label{sharp2} Assuming the hypotheses of Theorem \ref{sharp}, we have for all $\ep>0$,
\begin{multline*}\var\big(Z_{s^k},\psi\big)=\frac{\pi^{m-2}}{k^m}\left[\frac{\zeta(m+2)}4\,\|\ddbar\psi\|_{2}^2 \right.\\ \left.- \zeta(m+3)\left\{\frac 18\int_M\rho|\ddbar\psi|^2\Omega_M+\frac14\|\d^*\ddbar\psi\|_{2} ^2\right\}k\inv +O(k^{-3/2+\ep})\right].\end{multline*}\end{prop}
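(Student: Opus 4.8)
The plan is to start from the exact variance formula \eqref{varint1}, namely
$$\var(Z_{s^k},\psi)=\int_{M\times M}Q_k(z,w)\,i\ddbar\psi(z)\wedge i\ddbar\psi(w),\qquad Q_k=G\!\left(P_k^2\right),$$
and to localize the integral to a shrinking neighborhood of the diagonal. By Corollary~\ref{Qdecay}, the contribution from the region $\dist(z,w)\ge b\sqrt{(\log k)/k}$ is $O(k^{-p})$ for any $p$, so up to a negligible error we may integrate only over $\dist(z,w)<b\sqrt{(\log k)/k}$. On this near-diagonal region I would rescale, writing $w=z_0+u/\sqrtk$, $v$ absorbed into the second variable, and use Theorem~\ref{near} to expand the \szego kernel. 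The key point is that on the diagonal $\Pi_k(z,z)\sim k^m/\pi^m$, so after forming the ratio $P_k$ in \eqref{Pk} the leading Gaussian factor becomes $P_k(z_0+u/\sqrtk,z_0+v/\sqrtk)=e^{-\frac12|u-v|^2}\bigl[1+k^{-1/2}(\cdots)+k^{-1}(\cdots)+\cdots\bigr]$, where Theorem~\ref{p1p2} (with $p_1\equiv0$) supplies the explicit correction terms through order $k^{-1}$ in terms of the curvature tensor and scalar curvature.

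Next I would substitute this expansion into $G(P_k^2)$. Since $G$ is real-analytic on $[0,1)$ with $G(t)=\frac{1}{4\pi^2}\sum_{n\ge1}t^n/n^2$, and $P_k^2=e^{-|u-v|^2}\bigl[1+O(k^{-1/2})\bigr]$, I can Taylor-expand $G$ about the leading value $t_0=e^{-|u-v|^2}$, producing $Q_k=G(t_0)+G'(t_0)\,\de t+\tfrac12 G''(t_0)(\de t)^2+\cdots$ where $\de t$ collects the $k^{-1/2}$ and $k^{-1}$ corrections. The non-differentiability of $G$ at $t=1$ (flagged in the text right after Theorem~\ref{bipot}) is exactly where care is needed: on the diagonal $u=v$ one has $t_0=1$, and $G'(t)=-\frac{1}{4\pi^2}\log(1-t)/t$ blows up there. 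The remark that one may ``allow $t$ to take negative values'' suggests the resolution—after the Gaussian change of variables the singular behavior is integrable in the rescaled $u-v$ variable, and one tracks the logarithmic singularity rather than differentiating naively. I would then integrate term by term over $u,v\in\C^m$ (the rescaled variables), collecting the $\om^m$-volume factors from $i\ddbar\psi(z)\wedge i\ddbar\psi(w)$, which to leading order freeze the form data at $z=w=z_0$.

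The coefficient extraction is then a matter of computing Gaussian-type integrals. The leading term involves $\int_{\C^m}G(e^{-|u-v|^2})\,(\cdots)\,$ type integrals, and the $\zeta(m+2)$, $\zeta(m+3)$ factors should emerge from $\sum_n 1/n^2$ combined with integrals of the form $\int_0^\infty e^{-n r^2}r^{2m+1}\,dr\propto n^{-(m+1)}$, so that $\sum_n n^{-2}\cdot n^{-(m+1)}=\zeta(m+3)$ and similarly $\zeta(m+2)$ at the leading order. Writing $i\ddbar\psi$ in terms of its local coefficients and integrating against the kernel converts the $u,v$-integrals into the stated norms $\|\ddbar\psi\|_2^2$ and $\|\d^*\ddbar\psi\|_2^2$ and the curvature pairing $\int_M\rho|\ddbar\psi|^2\Om_M$. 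I expect the main obstacle to be twofold: first, rigorously justifying the term-by-term expansion of $G$ near the diagonal singularity and controlling the remainder uniformly—this is where the $O(k^{-3/2+\ep})$ error and the factor $k^\ep$ from \eqref{est96} enter, and one must check that the logarithmic singularity of $G'$ does not destroy integrability after rescaling. Second, the bookkeeping that turns the order-$k^{-1}$ terms—quartic-in-$(u,v)$ curvature polynomials times $G$-derivatives against the Gaussian—into precisely the combination $\tfrac18\int\rho|\ddbar\psi|^2\Om_M+\tfrac14\|\d^*\ddbar\psi\|_2^2$ is a delicate calculation, requiring integration by parts to relate the curvature-weighted integral and the $\d^*\ddbar\psi$ norm to the raw moments of the rescaled kernel.
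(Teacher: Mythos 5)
Your skeleton coincides with the paper's: Theorem~\ref{bipot} plus Corollary~\ref{Qdecay} to localize near the diagonal, the rescaled \szego expansion of Theorems~\ref{near} and~\ref{p1p2} to expand $P_k^2$, term-by-term Gaussian integration producing the $\zeta$-values via $\sum_n n^{-2}\cdot n^{-m-q/2}$, and Wick plus integration by parts ($\sum_p f_{p\bar p}=-\bar\d^*\bar\d f$, $f=*i\ddbar\psi$) to land on the stated coefficients. (The paper works with the iterated integral $\ical_k(z_0)=\int_M Q_k(z,z_0)\,i\ddbar\psi(z)$, freezing one argument at $z_0$ rather than rescaling both variables, but that is cosmetic.)

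The genuine gap is at the step you yourself flag: the expansion $Q_k=G(t_0)+G'(t_0)\,\de t+\tfrac12G''(t_0)(\de t)^2+\cdots$ with $t_0=e^{-|u-v|^2}$ is not justified as written, because $G'(t)\sim-\frac{1}{4\pi^2}\log(1-t)$ diverges as $t\to1$, i.e.\ exactly on the diagonal, and your proposed resolution is not the right mechanism --- the remark about allowing $t<0$ concerns the possibility that $1+\al(u)k^{-1}$ becomes negative for large $|u|$, not the singularity at $t=1$. What actually makes the argument close is a refinement of Theorem~\ref{near} that you never establish: one must show that in $P_k(z_0+u/\sqrtk,z_0)^2=e^{-|u|^2}\bigl(1+\sum k^{-r/2}a_r(u)+k^{-(n+1)/2}\wt E_{kn}(u)\bigr)$ the corrections satisfy $a_r(0)=0$, $da_r(0)=0$ and $|\wt E_{kn}(u)|\le C|u|^2k^\ep$ (Lemma~\ref{near2}, proved by induction from $P_k(z,z)=1$ and $dP_k^2|_{\rm diag}=0$, the latter forced by $P_k\le1$). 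With that vanishing in hand, the paper does \emph{not} differentiate $G$ for the main terms at all: it keeps the curvature correction inside the argument, $G\bigl(e^{-|u|^2}[1+\tfrac14R_{z_0}(u,\bar u,u,\bar u)k^{-1}]\bigr)$, reserves a single mean-value estimate $G'(t_k(u))\cdot O(|u|^2k^{-3/2+\ep})=O(|u|k^{-3/2+\ep})$ for the remainder (using $|u|\,G'(t_k(u))=O(1)$ precisely because of the $|u|^2$ factor), and then replaces differentiation of $G$ by its power series together with the binomial expansion of $[1+\al(u)k^{-1}]^n$, truncated after the linear term with an $O(k^{-3/2})$ error controlled by comparison with $G(e^{-|u|^2/2})$ (Lemma~\ref{Gintlemma}). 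Without the diagonal-vanishing input and this reorganization, your error term $G'(t_0)\cdot O(k^{-3/2+\ep})$ cannot be bounded pointwise and the $O(k^{-3/2+\ep})$ remainder in the Proposition is not obtained.
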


To prove Theorem~\ref{sharp2}, we shall derive off-diagonal asymptotics of $Q_k(z,w)$ (Lemma~\ref{Qapprox}). To obtain these asymptotics, we first apply Theorem \ref{near}:
\begin{lem}\label{near2} Under the hypotheses of Theorem \ref{near}, there are constants $c_{n\ep b}$ independent of $z_0$ such that
$$P_k\big(z_0+\frac u{\sqrtk},z_0\big)^2 = e^{-|u|^2}\left(1+\sum_{r = 1}^{n} k^{-r/2}
a_{r}(u) + k^{-(n +1)/2}\wt E_{kn}(u)\right),$$
where the $a_r(u)$ are polynomials in $(u,\bar u)$ of degree $\le 5r$, of the same parity as
$r$, and with $a_r(0)=0$, $da_r(0)=0$, and  $$|\wt E_{kn}(u)|\le |u|^2 c_{n\ep b}k^{\ep}\quad \mbox{for }\
|u|<b\sqrt{\log k}\,,$$ for
 $n\ge 1$.
\end{lem}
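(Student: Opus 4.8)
The plan is to read the expansion off Theorem~\ref{near} by specializing $v=0$ and assembling the three kernel values that build $P_k^2$. Denote by $B_1,B_2,B_3$ the bracketed series of Theorem~\ref{near} for $\Pi_k\big(z_0+\frac{u}{\sqrtk},z_0\big)$, $\Pi_k\big(z_0+\frac{u}{\sqrtk},z_0+\frac{u}{\sqrtk}\big)$, and $\Pi_k(z_0,z_0)$, respectively. In the quotient \eqref{Pk} the prefactors $k^m/\pi^m$ cancel, and the Gaussian exponents combine: the numerator $|\Pi_k(z_0+\frac{u}{\sqrtk},z_0)|^2$ carries $e^{-|u|^2}$, whereas both diagonal values carry $e^0=1$ (for the first, $u\cdot\bar u-\frac12(|u|^2+|u|^2)=0$). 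Hence
\beq P_k\big(z_0+\tfrac{u}{\sqrtk},z_0\big)^2=e^{-|u|^2}\,\Psi(u),\qquad \Psi(u):=\frac{|B_1|^2}{B_2\,B_3},\eeq
and it remains to expand $\Psi$.

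For the algebra I would first note that on the range $|u|<b\sqrt{\log k}$ each $B_i\to 1$ uniformly, since the degree-$\le 5r$ polynomials contribute $k^{-r/2}O((\log k)^{5r/2})$ and the remainders $k^{-(n+1)/2}O(k^\ep)$; thus $B_2,B_3\ge\frac12$ and $(B_2B_3)\inv$ may be expanded by a Neumann series. Multiplying $|B_1|^2\cdot(B_2B_3)\inv$ collects, at each order $k^{-r/2}$, a fixed polynomial $a_r(u)$ built from products of $p_{r_1}(u,0)$, $\overline{p_{r_2}(u,0)}$, $p_{r_3}(u,u)$, and the constants $p_s(0,0)$, with indices summing to $r$. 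Since each $p_s$ has degree $\le 5s$ and parity $s$, every such product has degree $\le 5r$ and parity $r$, which gives the asserted degree and parity of $a_r$.

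The two normalization conditions I would extract as follows. At $u=0$ all three brackets coincide with $\pi^m k^{-m}\Pi_k(z_0,z_0)$, which is real and positive, so $\Psi(0)=1$ and therefore $a_r(0)=0$ for every $r$. For $da_r(0)=0$ I would use $0\le P_k\le 1$ together with $P_k(z_0,z_0)=1$: the function $u\mapsto P_k(z_0+\frac{u}{\sqrtk},z_0)^2$ is smooth (being $|\Pi_k|^2$ divided by positive diagonal values) and attains its maximum $1$ at the interior point $u=0$, so its differential vanishes there. Since $d(e^{-|u|^2})(0)=0$, this forces $d\Psi(0)=0$, hence $da_r(0)=0$.

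The main work, and the chief obstacle, is the remainder bound. Writing $\wt E_{kn}(u)=k^{(n+1)/2}\big[\Psi(u)-1-\sum_{r=1}^n k^{-r/2}a_r(u)\big]$, the vanishing conditions just established give $\wt E_{kn}(0)=0$ and $d\wt E_{kn}(0)=0$, so the second-order Taylor estimate yields $|\wt E_{kn}(u)|\le\frac12|u|^2\sup_{|w|\le|u|}\|D^2\wt E_{kn}(w)\|$, which is precisely where the factor $|u|^2$ originates. It then suffices to prove the uniform bound $\|D^2\wt E_{kn}(u)\|\le c_{n\ep b}k^\ep$ on $|u|<b\sqrt{\log k}$. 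I would obtain this by propagating the derivative estimates \eqref{est96} on $E_{kn}$ through the operations $B_1\mapsto|B_1|^2$ and division by $B_2B_3$ (legitimate since $B_2B_3\ge\frac14$ there), using that derivatives of the polynomial factors in the stated range are bounded by powers of $\sqrt{\log k}$, hence by $k^\ep$. The delicate part is organizing this differentiation-and-division bookkeeping so that all constants are uniform in $z_0\in\overline V$, which is guaranteed by the uniformity clause of Theorem~\ref{near}.
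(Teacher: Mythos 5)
Your proposal is correct and follows essentially the same route as the paper: derive the weak expansion by dividing the near-diagonal expansions of the three \szego kernel values, deduce $a_r(0)=0$ and $da_r(0)=0$ from $P_k(z_0,z_0)=1$ and the Cauchy--Schwarz maximum at $u=0$, and then upgrade the remainder to $O(|u|^2k^\ep)$ by a second-order Taylor/integration argument using the $j=2$ derivative bounds of \eqref{est96}. The paper is in fact terser than you are about propagating the derivative estimates through the quotient, so your explicit attention to that bookkeeping is a point in your favor rather than a gap.
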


\begin{proof} The result with the weaker remainder estimate \beq\label{weaker}\textstyle P_k\big(z_0+\frac u{\sqrtk},z_0\big)^2 = e^{-|u|^2}\left(1+\sum_{r = 1}^{n} k^{-r/2}
a_{r}(u) + k^{-(n +1)/2}\wt E_{kn}(u)\right),\quad|\wt E_{kn}(u)|\le  C_{n\ep b}k^{\ep}\eeq is an immediate consequence of Theorem~\ref{near} with $j=0$. To obtain the sharper estimate,  let $$\la_k(u):=  P_k\big(z_0+\frac u{\sqrtk},z_0\big)^2\,.$$ We first show that $a_n(0)=\wt E_{kn}(0)=0$. Let $n\ge 1$ and suppose by induction that $a_j(0)=0$ for $j\le n-1$.  We then have by the weaker estimate \eqref{weaker}, $$1=\la_k(0)=1+k^{-n/2}a_n(0)+k^{-(n+1)/2}\wt E_{kn}(0),$$ and hence  $\wt E_{kn}(0)=-a_n(0)k^{1/2}$. But $\wt E_{kn}(0) = O(k^\ep)$; therefore $a_n(0)=0$ and hence $\wt E_{kn}(0)=0$.

Since $\la_k(u)\le \la_k(0)=1$ by Cauchy-Schwarz, we have $d\la_k(0)=0$. To show that $da_n(0)=0$, we suppose  by induction that $da_j(0)=0$ for $j\le n-1$. Then  
$$0=d\la_k(0)=k^{-n/2}da_n(0)+k^{-(n+1)/2}d\wt E_{kn}(0),$$ and 
 it follows by the same argument as above that  $da_n(0)=d\wt E_{kn}(0)=0$. (Thus the polynomial $a_n$ has no constant or linear terms.)

Since $d\wt E_{kn}(0)=0$, we have by Theorem \ref{near} $$\left|\frac{\d }{\d u^ j}\wt E_{kn}(u)\right| \le
\int_0^1\left| \sum_{l=1}^m\left[u^ l\frac{\d^2 \wt E_{kn}}{\d u^ l\d u^ j}(tu)+\bar u^ l\frac{\d^2 \wt E_{kn}}{\d\bar u^ l\d u^ j}(tu)\right]\right|dt\le  \,C_{2n\ep b}\,|u|\,k^\ep\,,$$ for  $|u|<b\sqrt{\log k}$. Thus
$$|\wt E_{kn}(u)| \le \int_0^12\left| \sum_{j=1}^mu^ j\frac{\d \wt E_{kn}}{\d u^ j}(tu)\right|dt\le 2\sqrt  m \,C_{2n\ep b}\,|u|^2\,k^\ep\,,$$ for  $|u|<b\sqrt{\log k}$.

\end{proof}
 
\begin{theo} \label{withK} Under the hypotheses of Theorem \ref{p1p2}, there are constants $c_{n\ep b}$ independent of $z_0$ such that
$$P_k\big(z_0+\frac u{\sqrtk},z_0\big)^2 = e^{-|u|^2}\left[1+\frac14R_{z_0}(u,\bar u,u,\bar u)k^{-1}+\sum_{r = 3}^{n} k^{-r/2}
a_{r}(u) + k^{-(n +1)/2}\wt E_{kn}(u)\right],$$
where the $a_r$ are polynomials in $(u,\bar u)$ of degree
$\le 5r$ of the same parity as
$r$ and with $a_r(0)=0$, $da_r(0)=0$, and $$|\wt E_{kn}(u)|\le |u|^2 c_{n\ep b}k^{\ep}\quad \mbox{for }\
|u|<b\sqrt{\log k}\,,$$ for
 $n\ge 3$.
\end{theo}
 
\begin{proof} 
By Lemma \ref{near2} and \eqref{p1}, we have \begin{eqnarray*} P_k\big(z_0+\frac u{\sqrtk},z_0\big)^2 &=&\frac{\Pi_k\big(z_0+\frac u{\sqrtk},z_0\big)\,\Pi_k\big(z_0,z_0+\frac u{\sqrtk}\big)}{\Pi_k\big(z_0,z_0\big)\,\Pi_k\big(z+\frac u{\sqrtk},z_0+\frac u{\sqrtk}\big)}\\&=& e^{-|u|^2}\left\{1+ k\inv[p_2(u,0)+p_2(0,u)-p_2(0,0)-p_2(u,u)]+\cdots\right\}.,\end{eqnarray*} and therefore $a_1(u)=0$.
By \eqref{p2}, $$p_2(u,0)=p_2(0,u)= \frac12\rho(z_0)+\frac 18 R_{z_0}(u,\bar u,u,\bar u)\,,\quad p_2(0,0)=p_2(u,u)=\frac12\rho(z_0)\,.$$
Therefore, $a_2(u)=\frac14R_{z_0}(u,\bar u,u,\bar u)$.
\end{proof}

Henceforth in this paper, we assume that the \kahler potential $\phi$ satisfies \eqref{Korder4}.
\begin{lem}\label{Qapprox} For $|u|<b\sqrt{\log k}$ and $\ep>0$, we have \beq\textstyle Q_k\left(z_0+\frac u{\sqrtk},z_0\right) = G\left( e^{-|u|^2}\left[1+\frac14R_{z_0}(u,\bar u,u,\bar u)k^{-1}\right]\right) +O\left(|u|k^{-3/2+\ep}\right).\eeq \end{lem}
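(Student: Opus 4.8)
The plan is to insert the near-diagonal expansion of $P_k^2$ from Theorem~\ref{withK} (taken with $n=3$) into the definition $Q_k=G(P_k^2)$ and to compare $G(P_k^2)$ with $G$ evaluated at the truncated argument
$$t_0:=e^{-|u|^2}\Big(1+\frac14 R_{z_0}(u,\bar u,u,\bar u)\,k^{-1}\Big)\,.$$
Writing $P_k(z_0+u/\sqrtk,z_0)^2=t_0+\delta$, Theorem~\ref{withK} gives
$$\delta=e^{-|u|^2}\big(k^{-3/2}a_3(u)+k^{-2}\wt E_{k3}(u)\big)\,,\qquad |\wt E_{k3}(u)|\le c\,|u|^2k^{\ep}\,,$$
where $a_3$ is a polynomial with $a_3(0)=0$ and $da_3(0)=0$, so that $a_3(u)=O(|u|^2)$ near the origin. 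For $k$ large both $t_0$ and $P_k^2$ lie in $[0,1)$ (they equal $1$ only at $u=0$, where $\delta=0$ and the asserted estimate is trivial), so the mean value theorem gives $Q_k-G(t_0)=G'(\xi)\,\delta$ for some $\xi$ between $t_0$ and $P_k^2$.

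I then need an upper bound for $G'(\xi)$ and for $\delta$. For $G'$ I use that $G'(t)=-\frac1{4\pi^2}\frac{\log(1-t)}{t}$ is increasing on $[0,1)$, with no singularity at $t=0$ (since $\log(1-t)\sim-t$) and only a logarithmic singularity at $t=1$; hence $G'(\xi)\le G'(\max(t_0,P_k^2))$, and everything reduces to bounding $1-\max(t_0,P_k^2)$ from below. Using $1-e^{-x}\ge xe^{-x}$ together with the fact that the curvature correction is $O(|u|^4k^{-1})$, one checks that $1-t_0\ge c_0|u|^2$ and $1-P_k^2\ge c_0|u|^2$ for $|u|\le 1$ and $k$ large; for $1\le|u|\le b\sqrt{\log k}$ the Gaussian factor $e^{-|u|^2}\le e^{-1}$ keeps both quantities bounded away from $1$. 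Since $\max(t_0,P_k^2)$ is also bounded below by a positive constant on $|u|\le1$, this yields the uniform estimate $G'(\xi)\le C\big(1+|\log|u||\big)$ on $0<|u|\le b\sqrt{\log k}$.

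With these in hand I would split the remaining estimate into the two ranges. For $1\le|u|\le b\sqrt{\log k}$ the factor $G'(\xi)$ is bounded and, since a polynomial times $e^{-|u|^2}$ is bounded on $\C^m$, $|\delta|\le Ck^{-3/2}\le C|u|k^{-3/2}$. For $|u|\le1$ I use the sharper $e^{-|u|^2}|a_3(u)|\le C|u|^2$ to get $|\delta|\le C|u|^2k^{-3/2}$, whence
$$|G'(\xi)\,\delta|\le C\big(1+|\log|u||\big)\,|u|^2\,k^{-3/2}\le C|u|\,k^{-3/2}\,,$$
because $(1+|\log|u||)\,|u|$ is bounded on $0<|u|\le1$. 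In both ranges $|Q_k-G(t_0)|=O(|u|\,k^{-3/2+\ep})$, which is the assertion (the factor $k^{\ep}$ is only needed to absorb the remainder contribution $k^{-2}\wt E_{k3}$).

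The main obstacle is the one already flagged before Theorem~\ref{near}: $G$ is not differentiable at $t=1$, a value attained precisely on the diagonal $u=0$. What makes the argument go through is that this singularity is only logarithmic and is compensated by the second-order vanishing of $\delta$ at $u=0$ coming from $a_3(0)=da_3(0)=0$ (ultimately from $P_k(z,z)\equiv1$); quantitatively, $(1+|\log|u||)\,|u|^2\to0$ as $u\to0$, so the logarithm is absorbed and no power of $\log k$ survives in the final bound.
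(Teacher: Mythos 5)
Your argument is correct and follows essentially the same route as the paper: both insert the near-diagonal expansion of Theorem~\ref{withK} into $G$, apply the mean value theorem, and tame the logarithmic singularity of $G'$ at $t=1$ by combining the upper bound $\xi\le e^{-|u|^2/2}$ (from monotonicity of $G'$) with the quadratic vanishing of the error term at $u=0$ coming from $a_r(0)=da_r(0)=0$. The only cosmetic differences are that the paper truncates at $n=2$, so its error term is $O(|u|^2k^{-3/2+\ep})$ rather than your $O(|u|^2 k^{-3/2})$, and it packages the singularity control as $|u|\,G'(t_k(u))=O(\sqrt{\log k})$ instead of your $G'(\xi)\le C\left(1+|\log|u||\right)$.
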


\begin{proof}  
By Theorem \ref{withK} (with $n=2$) and \eqref{Qk}, there is a constant $C$ (depending only on $b$ and $\ep$) such that for $|u|<b\sqrt{\log k}$, $k\gg 0$,
\beq\label{mvt-}\textstyle Q_k\big(z_0+\frac u{\sqrtk},z_0\big) = G\left(e^{-|u|^2}\left[1+\frac14R_{z_0}(u,\bar u,u,\bar u)k^{-1}\right]\right) +G'(t_k(u))\cdot O(|u|^2k^{-3/2+\ep})\,,\eeq\beq\label{mvt} \textstyle t_k(u)=e^{-|u|^2}\left[1+\frac14R_{z_0}(u,\bar u,u,\bar u)k^{-1}+C_k(u)\,|u|^2k^{-3/2+\ep}\right],\quad |C_k(u)| \le C.\eeq 
Note that we can choose $k_0$ such that for $0<|u|<b\sqrt{\log k}$  and $k\ge k_0$, we have \beq\label{C}0 <1+\frac14R_{z_0}(u,\bar u,u,\bar u)k^{-1} \pm C|u|^2k^{-3/2+\ep}\le e^{\half |u|^2}\eeq and thus $0<t_k(u)\le e^{-\half |u|^2}$ so that $G'(t_k(u))$ and $G(e^{-|u|^2}[1+\frac14R_{z_0}(u,\bar u,u,\bar u)k^{-1}])$ are well-defined for $k\ge k_0$.
To complete the proof, we must show that
\beq\label{G'} |u|\,G'(t_k(u)) = O(\sqrt{\log k}), \ \mbox{for } \ 0<|u|<b\sqrt{\log k},\ k\ge k_0.\eeq 
Let $s=e^{-\half|u|^2}$. By \eqref{mvt}--\eqref{C}, for $k\ge k_0$ we have $$|u|\,G'(t_k(u)) \le |u|\,G'(e^{-\half |u|^2})=-\frac 1{4\pi^2}\sqrt{-2\log s}\;\frac{\log(1-s)}{s}.$$ We then have $$\limsup_{u\to 0}|u|\,G'(t_k(u)) \le \limsup_{s\to 1^-}\frac {-1}{4\pi^2}\sqrt{-2\log s}\;\frac{\log(1-s)}{s} = 0, \quad\mbox{uniformly for }\ k\ge k_0.$$
Thus, $|u|\,G'(t_k(u))$ is bounded for $|u|\le 1$, $k\ge k_0$. On the other hand, $G'(e^{-\half |u|^2})$ is bounded for $|u|\ge 1$, yielding \eqref{G'}. Lemma \ref{Qapprox} then follows from \eqref{mvt-}.\end{proof}

We begin the proof of Proposition \ref{sharp2}  following the method of \cite{SZ5}: By Theorem \ref{bipot}, we have
\begin{equation}\label{int1s}\var\big(Z_{s^k},\psi\big)
= \int_{M}\ical_k\,i\ddbar\psi\;,\end{equation} where
\begin{equation}\label{int2s0}\ical_k(z_0)=\int_M
Q_k(z,z_0)\,i\ddbar \psi(z)\,, \quad \forall\ z_0\in M\;.\end{equation}

 We  write
\begin{equation}\label{psi}i\ddbar \psi =
f\,\Om_M\;,\qquad f \in\ccal^\infty_\R(M),\end{equation} so that
\begin{equation}\label{int2s}\ical_k(z_0)=\int_{ M}
Q_k(z,z_0)\,f(z)\Om_M(z)\;.\end{equation} 

We now choose local coordinates and local frame satisfying equation \eqref{Korder4} as described, for example, in \cite{Bo} or \cite[Lemma~2.7]{ LS}. By Corollary
\ref{Qdecay}, we can approximate
$\ical_k(z_0)$ by integrating \eqref{int2s} over a small ball
about $z_0$:
\begin{equation}\label{Iz}\ical_k(z_0) =   \int_{|u|\le b\sqrt{\log k}}
Q_k\left(z_0+\frac u{\sqrtk},z_0\right)\, f\left(z_0+\frac
u{\sqrtk}\right)\,
\Om_M\left(z_0+\frac
u{\sqrtk}\right)+O\left(\frac 1 {k^{m+2}}\right),
\end{equation}   where we let $b=\sqrt{m+3}$.
By \eqref{Rijkl} and \eqref{Korder4},
\beq\label{Rz0} R_{j\bar l p\bar q}(z_0) = - \frac{\d^4\phi}{\d z^j\bar\d z^l\d z^p\bar\d z^q}(z_0)\,,\eeq and thus
\begin{multline}\om\left(z_0+\frac u\sqrtk\right) =\frac i{2} \ddbar  \phi\left(z_0+\frac u\sqrtk\right)\\=\frac i{2} \sum_{j=1}^m\frac 1k du^ j\wedge d\bar u^ j -\frac i{2}\sum_{j,l,p,q} \left(\frac1{k^2}R_{j\bar lp\bar q}(z_0)u^ p\bar u^ q +O(\frac{|u|^3}{k^{5/2}})\right) du^ j\wedge d\bar u^ l\,.\label{kahlerE}\end{multline} 
Hence
\begin{equation}\Om_M\left(z_0+\frac
u{\sqrtk}\right)=\frac 1{m!}\om^m=\left[1 -\left(  \sum_{j,p,q}R_{j\bar jp\bar q}(z_0)u^ p\bar u^ q\right) k^{-1}+O(k^{-3/2+\ep})\right]\frac1{k^m}\,\nu_u\,,\label{OmegaM}\eeq
for $|u|\le b\sqrt{\log k}$, where 
\beq\label{Euc} \nu_u= (\frac i2 du^ 1\wedge d\bar u^ 1)\wedge\cdots\wedge (\frac i2 du^ m\wedge d\bar u^ m)\eeq
denotes the Euclidean volume form. We then have by \eqref{Iz}, \eqref{OmegaM} and Lemma \ref{Qapprox},
\begin{multline}\!\!\!\!\ical_k(z_0)=  \frac 1 {k^m}
 \int_{|u|\le b\sqrt{\log k}}  \left\{G\left( e^{-|u|^2}\left[1+\frac14\sum_{j,l,p,q}R_{j\bar lp\bar q}(z_0)u^ j\bar u^ lu^ p\bar u^ qk^{-1}\right]\right) +O\left(k^{-3/2+\ep}\right)\right\}\\ \times
\left\{f(z_0)+2\Re\sum_jf_j(z_0)u^ jk^{-1/2}+\sum _{j,l}\left[\Re f_{jl}(z_0)u^ ju^ l+f_{j\bar l}(z_0)u^ j\bar u^ l\right]k^{-1}+O(k^{-3/2+\ep})\right\}\\
\times \left\{1 -\left(  \sum_{j,p,q}R_{j\bar jp\bar q}(z_0)u^ p\bar u^ q\right) k^{-1}+O(k^{-3/2})\right\}\nu_u+O(k^{-m-2})\;,\label{U1}
\end{multline} where we write $$f_j=\frac{\d f}{\d z^j}\,,\quad f_{jl}=\frac{\d^2 f}{\d z^j \d z^l}\,,\quad f_{j\bar l}=\frac{\d^2 f}{\d z^j\d\bar z^l}\,.$$
Since the integral of the $O(k^{-3/2+\ep})$ terms in \eqref{U1} over the $(b\sqrt{\log k})$-ball is $O(k^{-3/2+\ep'})$, we have
\begin{multline}\ical_k(z_0)=  \frac 1 {k^m}
 \int_{|u|\le b\sqrt{\log k}}  G\left( e^{-|u|^2}\left[1+\frac14\sum_{j,l,p,q}R_{j\bar lp\bar q}(z_0)u^ j\bar u^ lu^ p\bar u^ q\;k^{-1}\right]\right) \\ \times
\left[f(z_0)+2\Re\sum_jf_j(z_0)u^ j\;k^{-1/2}+\sum _{j,l}\left[\Re f_{jl}(z_0)u^ ju^ l+f_{j\bar l}(z_0)u^ j\bar u^ l\right]k^{-1}\right]\\ 
\times \left[1 -  \sum_{j,p,q}R_{j\bar jp\bar q}(z_0)u^ p\bar u^ q\; k^{-1}\right]\nu_u+O(k^{-m-3/2+\ep'})\;.\label{U1a}\end{multline}

Since $ G(t)=O(t)$  for $0\le t\le 1$, it follows from \eqref{C}  that the integrand in \eqref{U1a} is $O\big(e^{-\half |u|^2}(1+|u|^8)\big)$ for $k\ge k_0$, $u\in\C^m$. 
Since \beq\label{decay}\int_{|u|\ge b\sqrt{\log k}}e^{-\half |u|^2}(1+|u|^r)\,\nu_u= O(k^{-b^2/2+\ep}), \qquad r\ge 0\,,\eeq  we can replace the domain of integration $\{|u|\le b\sqrt{\log k}\}$ with  $\C^m$ in \eqref{U1a} (under our assumption $b^2=m+3$).

We say that a function $F$ on $\C^m$ has {\it polynomial growth\/} if $|F(u)| = O(1+|u|^r)$ for some $r\in \R_+$. We shall use the following estimate in the proof of Proposition~\ref{sharp2}:
\begin{lem}\label{Gintlemma} Let $F\in\ccal^0(\C^m)$ such that $F$ has polynomial growth, and let $\al(u)$ be a polynomial in $(u,\bar u)$ with $\al(0)=0$, $d\al(0)=0$. Then
\begin{multline}\label{Gint}\int_{\C^m} G\left(e^{-|u|^2}[1+\al(u)k^{-1}]\right)F(u)\,\nu_u=\\ \frac 1{4\pi^2}\sum_{n=1}^\infty \frac1{n^2}\int_{\C^m} e^{-n|u|^2}[1+n\al(u)k^{-1}]F(u)\,\nu_u +O(k^{-3/2})\,.\end{multline} for $k\gg 0$.\end{lem}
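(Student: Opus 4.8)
The plan is to substitute the power series $G(t)=\frac1{4\pi^2}\sum_{n\ge1}t^n/n^2$ from \eqref{G} into the left-hand side of \eqref{Gint}, to interchange the summation with the integral, and then for each $n$ to split $t^n=e^{-n|u|^2}(1+b)^n$, with $b=\al(u)k^{-1}$, by the binomial identity $(1+b)^n=1+nb+\big[(1+b)^n-1-nb\big]$. The terms $1+nb$ reproduce exactly the asserted main sum $\frac1{4\pi^2}\sum_n\frac1{n^2}\int e^{-n|u|^2}[1+n\al k^{-1}]F\,\nu_u$, so the whole lemma reduces to showing that the contribution of the quadratic-and-higher remainder $(1+b)^n-1-nb$ integrates to $O(k^{-2})$, which is stronger than the stated $O(k^{-3/2})$.

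First I would record a uniform domination bound. Since $\al(0)=0$ and $d\al(0)=0$, we have $|\al(u)|\le C(|u|^2+|u|^{N})$ for some integer $N$, and the function $u\mapsto C(|u|^2+|u|^{N})/(e^{|u|^2/2}-1)$ extends continuously to $u=0$ and tends to $0$ as $|u|\to\infty$, hence is bounded. Consequently there is a $k_0$ so that for all $k\ge k_0$,
$$e^{-|u|^2}\big(1+|\al(u)|k^{-1}\big)\le e^{-|u|^2/2}\qquad(u\in\C^m).$$
This guarantees $|t|\le e^{-|u|^2/2}\le1$, so the series for $G$ converges at $t=e^{-|u|^2}(1+\al(u)k^{-1})$ (allowing negative values of $t$ as in the Remark after Theorem~\ref{bipot}), and it also yields a quantitative lower bound on the denominators of the geometric sums below. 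I would then justify interchanging $\sum_n$ with $\int_{\C^m}$ by absolute convergence (Tonelli): using $a^n|1+b|^n\le(a(1+|b|))^n\le e^{-n|u|^2/2}$ with $a=e^{-|u|^2}$, together with the Gaussian moment estimates $\int_{\C^m}e^{-n|u|^2}(1+|u|^r)\,\nu_u=O(n^{-m-r/2})$ and the polynomial growth of $F$, each of the three series ($1$, $nb$, remainder) is dominated by a summable series in $n$. In particular the $1$-series and the $nb$-series converge and give the stated main term, the latter being $O(k^{-1})$.

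The crux is the remainder. Using the integral form of Taylor's theorem,
$$(1+b)^n-1-nb=n(n-1)b^2\int_0^1(1-s)(1+sb)^{n-2}\,ds,$$
the remainder contribution to $G$ equals $\frac{b^2}{4\pi^2}\int_0^1(1-s)\,S(s)\,ds$, where $S(s)=\sum_{n\ge2}\frac{n-1}{n}a^n(1+sb)^{n-2}$. The apparent danger is that $\binom n2\sim n^2$ cancels the $1/n^2$, making the series only borderline convergent, and that $(1+sb)^{n-2}$ may be large; the key observation is that $|S(s)|\le\sum_{n\ge2}a^n|1+sb|^{n-2}=\frac{a^2}{1-a|1+sb|}$, in which the potentially singular factor $|1+sb|^{-2}$ cancels. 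By the domination bound, $a|1+sb|\le a(1+|b|)\le e^{-|u|^2/2}$, so $|S(s)|\le e^{-2|u|^2}/(1-e^{-|u|^2/2})$ uniformly in $s$ and $n$. Hence the remainder is bounded pointwise by $\frac{\al(u)^2k^{-2}}{8\pi^2}\cdot\frac{e^{-2|u|^2}}{1-e^{-|u|^2/2}}$. Since $\al(u)^2=O(|u|^4)$ near $0$ while $1/(1-e^{-|u|^2/2})=O(|u|^{-2})$ there, the integrand is $O(|u|^2)$ near the origin and decays exponentially at infinity (beating the polynomial growth of $\al^2F$); thus its integral converges and the remainder is $O(k^{-2})$, completing the proof.

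I expect the main obstacle to be precisely this last step: controlling the remainder uniformly in $n$ near $u=0$, where $G$ fails to be differentiable because the argument $t\to1$. The twin facts that make it work are the cancellation of $|1+sb|^{-2}$ in the geometric sum (which turns a divergent-looking series into $\frac{a^2}{1-a|1+sb|}$) and the second-order vanishing of $\al$ at the origin, which tames the $|u|^{-2}$ singularity coming from $1/(1-e^{-|u|^2/2})$.
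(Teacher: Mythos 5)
Your proof is correct, and it shares the paper's overall skeleton: expand $G$ by its power series \eqref{G}, establish the domination bound $1+|\al(u)|k^{-1}\le e^{|u|^2/2}$ (valid for $k$ large because $\al$ has no constant or linear term) to justify absolute convergence and the interchange of $\sum_n$ with $\int_{\C^m}$, and split $[1+\al(u)k^{-1}]^n$ into $1+n\al(u)k^{-1}$ plus a remainder. Where you genuinely diverge from the paper is in estimating that remainder. The paper bounds the tail $\sum_{j=2}^n{n \choose j}\al^jk^{-j}$ by extracting a factor $k^{-3/2}$, using $-j+3/2\le -j/4$ for $j\ge 2$ to absorb the leftover powers of $k$ into $(|\al(u)|k^{-1/4})^j$, and re-summing the result into $G\big(e^{-|u|^2}(1+|\al(u)|k^{-1/4})\big)$, whose integral against $|F|$ is finite once $k\ge k_1^4$; this yields $O(k^{-3/2})$. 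You instead invoke the integral form of Taylor's theorem, $(1+b)^n-1-nb=n(n-1)b^2\int_0^1(1-s)(1+sb)^{n-2}\,ds$, and sum the resulting geometric series in closed form, $\sum_{n\ge 2}a^n|1+sb|^{n-2}=a^2/(1-a|1+sb|)$ with $a=e^{-|u|^2}$. The two points you flag as the crux are exactly right and correctly handled: the factor $n(n-1)/n^2\le 1$ removes the apparent divergence of the series, and the second-order vanishing of $\al$ at the origin cancels the $|u|^{-2}$ singularity of $1/(1-e^{-|u|^2/2})$, so the dominating integrand is $O(|u|^2)$ near $0$ and Gaussian at infinity. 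Your method thus delivers the sharper remainder $O(k^{-2})$, which certainly implies the stated $O(k^{-3/2})$. The trade-off is that the paper's cruder redistribution trick carries over verbatim to the multi-term version in Lemma~\ref{plemma} (where the tail runs over $p+1\le j\le np$ and one uses $p-j+1/2\le -j/(2p+2)$), whereas a closed-form Taylor-remainder summation would be considerably messier there.
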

\begin{proof} Since $\al(0)=0$ and $d\al(0)=0$, we can choose $k_1$ such that \beq \label{k1}1+|\al(u)|k^{-1}\le e^{|u|^2/2}\,,\quad \mbox{for }\ k\ge k_1\,,\ u\in\C^m.\eeq 
Since $|G(t)|\le G(|t|)=O(|t|)$ for $|t|\le 1$, for $k\ge k_1$  we have by \eqref{k1} \begin{eqnarray}\int_{\C^m} \left|G\left(e^{-|u|^2}[1+\al(u)k^{-1}]\right)F(u)\right|\nu_u&\le& \int_{\C^m} G\left(e^{-|u|^2}[1+|\al(u)|k^{-1}]\right)|F(u)|\,\nu_u \notag   \\&\le& \int_{\C^m} G\left(e^{-|u|^2/2}\right)|F(u)|\,\nu_u\notag\\&\le & C\int_{\C^m}e^{-|u|^2/2}|F(u)|\,\nu_u\ <\ \infty\,.\label{left}\label{Gintfinite}\end{eqnarray} Thus the left side of \eqref{Gint} is well-defined and finite, for $k\ge k_1$.
Furthermore, for $p\ge0,\ q\ge1$, 
\beq\label{sim2} \sum_{n=1}^\infty \int_{\C^m}\frac 1{n^q} e^{-n|u|^2}|u|^p\,\nu_u =\sum_{n=1}^\infty \frac 1{n^{q+m+p/2}}\int_{\C^m} e^{-|u|^2}|u|^p\,\nu_u <\infty\,\eeq
and therefore \beq\label{sim3} \left|\sum_{n=1}^\infty \frac1{n^2}\int_{\C^m} e^{-n|u|^2}[1+n\al(u)k^{-1}]F(u)\,\nu_u\right|\le \sum_{n=1}^\infty \frac1{n^2}\int_{\C^m} e^{-n|u|^2}[1+n|\al(u)|]F(u)\,\nu_u <\infty\,,\eeq for $k\ge 1$. Hence the right side of \eqref{Gint} is  also well-defined.

By \eqref{G}, \beq \int_{\C^m} G\left(e^{-|u|^2}[1+\al(u)k^{-1}]\right)F(u)\nu_u =\frac 1{4\pi^2}\sum_{n=1}^\infty \frac1{n^2}\int_{\C^m} e^{-n|u|^2}[1+\al(u)k^{-1}]^nF(u)\,\nu_u\,.\eeq
Let \begin{eqnarray*} E&:=& \int_{\C^m} G\left(e^{-|u|^2}[1+\al(u)k^{-1}]\right)F(u)\nu_u -\frac 1{4\pi^2}\sum_{n=1}^\infty \frac1{n^2}\int_{\C^m} e^{-n|u|^2}[1+n\al(u)k^{-1}]F(u)\,\nu_u\\&=&  \frac 1{4\pi^2} \sum_{n=2}^\infty  \int_{\C^m}\frac{e^{-n|u|^2}}{n^2}\sum_{j=2}^n{ n\choose j} \al(u)^jk^{-j}|F(u)|\,\nu_u\,. \end{eqnarray*}

Then
$$|E| \le \frac 1{4\pi^2} k^{-3/2}\sum_{n=2}^\infty \int_{\C^m}\frac{e^{-n|u|^2}}{n^2}\sum_{j=2}^n{ n\choose j} |\al(u)|^jk^{-j+3/2}|F(u)|\,\nu_u\,.$$
Since $-j+3/2 \le -j/4$ for $j\ge 2$, we have for $k\ge k_1^4$,
\begin{eqnarray*}|E| &\le&\frac 1{4\pi^2} k^{-3/2}\sum_{n=2}^\infty \int_{\C^m} \frac{e^{-n|u|^2}}{n^2}\sum_{j=2}^n{ n\choose j} \left(|\al(u)|k^{-1/4}\right)^j|F(u)|\,\nu_u\\&\le&  \frac 1{4\pi^2}k^{-3/2}\sum_{n=1}^\infty\int_{\C^m} \frac{e^{-n|u|^2}}{n^2}\left(1+|\al(u)|k^{-1/4}\right)^n |F(u)|\,\nu_u\\&=&\frac 1{4\pi^2}k^{-3/2}\int_{\C^m}G\left(e^{-|u|^2}(1+|\al(u)|k^{-1/4})\right)|F(u)|\,\nu_u\,.
\end{eqnarray*} By \eqref{Gintfinite},
$$\int_{\C^m}G\left(e^{-|u|^2}(1+|\al(u)|k^{-1/4})\right)|F(u)|\,\nu_u <\infty$$
for $k\ge k_1^4$, and hence $|E|=O(k^{-3/2})$.\end{proof}

We now continue the proof of Proposition \ref{sharp2}. Applying Lemma \ref{Gintlemma} with $$\al(u)=
\frac14\sum_{j,l,p,q}R_{j\bar lp\bar q}(z_0)u^ j\bar u^ lu^ p\bar u^ q\,,$$ equations \eqref{U1a}--\eqref{decay} yield
\begin{multline}\ical_k(z_0)=  \frac 1 {4\pi^2k^m}\sum_{n=1}^\infty \frac1{n^2}\int_{\C^m} e^{-n|u|^2}\left[1+\frac n4\sum_{j,l,p,q}R_{j\bar lp\bar q}(z_0)u^ j\bar u^ lu^ p\bar u^ qk^{-1}\right]\\ \times
\left[f(z_0)+2\Re\sum_jf_j(z_0)u^ jk^{-1/2}+\sum _{j,l}\left[\Re f_{jl}(z_0)u^ ju^ l+f_{j\bar l}(z_0)u^ j\bar u^ l\right]k^{-1}\right]\\
\times \left[1 -  \sum_{j,p,q}R_{j\bar jp\bar q}(z_0)u^ p\bar u^ q\; k^{-1}\right]\nu_u+O\left(\frac 1{k^{m+3/2-\ep}}\right).\label{U2}\end{multline}

Then after gathering terms, we have \begin{multline}
\ical_k(z_0)=
\frac 1 {4\pi^2k^m}\sum_{n=1}^\infty \frac1{n^2}\int_{\C^m} e^{-n|u|^2}\left\{f(z_0)+ 2\Re\sum_jf_j(z_0)u^ jk^{-1/2}\right.\\+\left(\frac n4\sum_{j,l,p,q}R_{j\bar lp\bar q}(z_0)u^ j\bar u^ lu^ p\bar u^ q-  \sum_{j,p}R_{j\bar jp\bar p}(z_0)u^ p\bar u^ q\right)     f(z_0)k^{-1}\\+\left.\sum _{j,l}\left[\Re f_{jl}(z_0)u^ ju^ l+f_{j\bar l}(z_0)u^ j\bar u^ l\right]k^{-1}\right\}\nu_u+O\left(\frac 1{k^{m+3/2-\ep}}\right).\label{U3}\end{multline}
By making a change of variable $u^ j\mapsto e^{i\theta}u^ j$ in \eqref{U3}  for a fixed index $j$ and noting that the volume form is invariant under this transformation, one sees that  terms where $u^ j$ is not paired with  $\bar u^ j$ have vanishing integrals. So we obtain
\begin{multline}
\ical_k(z_0)=
\frac 1 {4\pi^2k^m}\sum_{n=1}^\infty \frac1{n^2}\int_{\C^m} e^{-n|u|^2}\Bigg\{f(z_0)+\\\left(\left[\frac n4\sum_{j,l,p,q}R_{j\bar lp\bar q}(z_0)u^ j\bar u^ lu^ p\bar u^ q-  \sum_{j,p}R_{j\bar jp\bar p}(z_0)|u^ p|^2\right]     f(z_0)+\sum _{p}f_{p\bar p}(z_0)|u^ p|^2\right)k^{-1}\Bigg\}\nu_u\\+O\left(\frac 1{k^{m+3/2-\ep}}\right).\label{U4}\end{multline}
With the change of variables $u=\frac 1{\sqrt n}v$, we have
\begin{multline}\ical_k(z_0)=
\frac {\zeta(m+2)} {4\pi^2}\left(\int_{\C^m}e^{-|v|^2}\nu_v\right) f(z_0)k^{-m}\\
+\frac {\zeta(m+3)} {4\pi^2}\left(\int_{\C^m}e^{-|v|^2}\left[ \frac 14\sum_{j,l,p,q}R_{j\bar lp\bar q}(z_0)v^ j\bar v^ lv^ p\bar v^ q-  \sum_{j,p}R_{j\bar jp\bar p}(z_0)|v^ p|^2\right]f(z_0)\right.\\\left. + \sum _{p}f_{p\bar p}(z_0)|v^ p|^2\right)  \nu_v\, k^{-m-1}+O\left(k^{-m-3/2+\ep}\right).
\label{U5}\end{multline}

By the Wick formula and \eqref{Rz0},
\begin{eqnarray*}\frac 1{\pi^m}\int_{\C^m}\left(\sum_{j,l,p,q}R_{j\bar lp\bar q}(z_0)v^ j\bar v^ lv^ p\bar v^ q \right)e^{-|v|^2}\nu_v&=&\sum_{j,l,p,q}R_{j\bar lp\bar q}(z_0)(\delta^j_{l}\delta^p_{q}+\delta^j_{q}\delta^p_{l})\\&=&\sum_{j,p}\left[R_{j\bar jp\bar p}(z_0)+R_{j\bar pp\bar j}(z_0)\right]\\&=&2\sum_{j,p}R_{j\bar jp\bar p}(z_0)\ =\ 2\rho(z_0)\,,\end{eqnarray*} where we recall that $\rho(z_0)$ denotes the scalar curvature at $z_0$.
Thus
\begin{multline}\ical_k(z_0)=\frac{\pi^{m-2}}{4}\zeta(m+2)f(z_0)k^{-m}\\+\frac{\pi^{m-2}}{4}\zeta(m+3)\left[-\frac 12\rho(z_0)f(z_0) + \sum _{p}f_{p\bar p}(z_0)\right]k^{-m-1}+O\left(k^{-m-3/2+\ep}\right).
\label{U6}\end{multline}
Corollary \ref{Qdecay} and Theorem \ref{near} guarantee that the remainder estimate $O(k^{-m-3/2+\ep})$ in \eqref{U6} is uniform over  $z_0\in M$.

Note that $\sum _{p}f_{p\bar p}(z_0)=-\bar\d^*\bar\d f(z_0)$ since the $z^p$ are normal coordinates at $z_0$. Hence
by \eqref{int1s} and \eqref{U6}, we have
\begin{multline}\var\big(Z_{s^k},\psi\big) = \frac{\pi^{m-2}}{4}\zeta(m+2)\left(\int_Mf^2\Om_M\right)k^{-m}\\-\frac{\pi^{m-2}}{4}\zeta(m+3)\int_M\left[\frac 12\rho f^2 + f\bar\d^*\bar\d f\right]\Om_Mk^{-m-1}+O(k^{-m-3/2+\ep})\,.
\label{Var}\end{multline}
Since $f=*i\ddbar\psi$, we have 
\beq\label{f^2}f^2=|\ddbar\psi|^2,\qquad \int_Mf^2\Om_M = \int_M|\ddbar\psi|^2\Om_M= \|\ddbar\psi\|_2^2\,.\eeq
Furthermore, \beq\label{3rd} \int_Mf\bar\d^*\bar\d f\,\Om_M= \langle \bar\d^*\bar\d f, f\rangle =  \|\bar\d f\|^2_2= \|\bar\d (*\ddbar\psi)\|_2^2= \|\d^*\ddbar\psi\|^2_2.\eeq
The formula of Proposition~\ref{sharp2} follows from
\eqref{Var}--\eqref{3rd}.\qed
 
\section{Proof of Theorem \ref{sharp}.}\label{theproof}

To complete the proof of Theorem \ref{sharp}, it suffices to show that
\beq\label{asympt} \var(Z_{s^k},\psi) = A_0 k^{-m}+A_1k^{-m-1}+\cdots + A_pk^{-m-p}+O(k^{-m-p-1})\,,\eeq for $p\ge 2$.
We shall verify \eqref{asympt} by generalizing Lemmas \ref{Qapprox}--\ref{Gintlemma}. First we have
\begin{lem}\label{Qapproxp} Let $p\ge2$, $\ep>0$. For $|u|<b\sqrt{\log k}$, we have \beq\textstyle Q_k\left(z_0+\frac u{\sqrtk},z_0\right) = G\left( e^{-|u|^2}\left[1+\sum_{j=2}^pa_j(u)k^{-j/2}\right]\right) +O\left(|u|k^{-(p+1)/2+\ep}\right),\eeq where  the $a_j(u)$ are as in Theorem~\ref{withK} and $a_2(u)=\frac14R_{z_0}(u,\bar u,u,\bar u)$. \end{lem}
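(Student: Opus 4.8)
The plan is to follow the proof of Lemma~\ref{Qapprox} essentially verbatim, since that lemma is precisely the case $p=2$ of the present statement; the only changes are that one carries the near-diagonal expansion to higher order and keeps track of a higher power of $k$ in the remainder. Concretely, I would first invoke Theorem~\ref{withK} with $n=p$ to write, for $|u|<b\sqrt{\log k}$,
\[\textstyle P_k\big(z_0+\frac u{\sqrtk},z_0\big)^2=S_p(u)+e^{-|u|^2}k^{-(p+1)/2}\wt E_{kp}(u),\qquad S_p(u):=e^{-|u|^2}\Big[1+\sum_{r=2}^p a_r(u)k^{-r/2}\Big],\]
with $|\wt E_{kp}(u)|\le |u|^2 c_{p\ep b}k^\ep$, so that $P_k^2-S_p(u)=O\big(e^{-|u|^2}|u|^2k^{-(p+1)/2+\ep}\big)$.

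Recalling $Q_k=G(P_k^2)$ from \eqref{Qk}, I would then apply the mean value theorem to $G$ in order to split off the main term, obtaining $Q_k\big(z_0+\frac u{\sqrtk},z_0\big)=G(S_p(u))+G'(t_k(u))\cdot O(|u|^2k^{-(p+1)/2+\ep})$ for some $t_k(u)$ lying between $S_p(u)$ and $P_k^2$. It then suffices, exactly as in the step leading to \eqref{G'}, to show that $|u|\,G'(t_k(u))=O(\sqrt{\log k})$: since $\sqrt{\log k}=O(k^\ep)$ and $G'(t_k(u))\,|u|^2=|u|\cdot\big(|u|\,G'(t_k(u))\big)=O(|u|\,k^\ep)$, this converts the error into the claimed $O(|u|k^{-(p+1)/2+\ep})$.

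The main obstacle, and the only place where $p>2$ demands genuine (if routine) extra care, is verifying that $t_k(u)$ remains in $(0,e^{-\half|u|^2}]$, so that $G'(t_k(u))$ is both well-defined and controllable; this is the analogue of the key inequality \eqref{C}. Here I would use that each $a_r$ is a polynomial of bounded degree vanishing to second order at the origin ($a_r(0)=0$, $da_r(0)=0$), hence of polynomial growth with $a_r(u)=O(|u|^2)$ near $u=0$. Combined with $r\ge2$ and the restriction $|u|<b\sqrt{\log k}$, the intermediate terms $a_r(u)k^{-r/2}$ for $3\le r\le p$ tend to zero uniformly, so that one can fix $k_0$ with $0<1+\sum_{r=2}^p a_r(u)k^{-r/2}\pm c_{p\ep b}|u|^2k^{-(p+1)/2+\ep}\le e^{\half|u|^2}$ for $k\ge k_0$. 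Granting this, the estimate for $|u|\,G'(t_k(u))$ is identical to the $p=2$ case: writing $s=e^{-\half|u|^2}$ and using the monotonicity of $G'$ gives $|u|\,G'(t_k(u))\le-\frac1{4\pi^2}\sqrt{-2\log s}\,\frac{\log(1-s)}{s}$, which vanishes as $u\to0$ and is $O(|u|)=O(\sqrt{\log k})$ for $|u|\ge1$.
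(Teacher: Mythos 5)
Your proposal is correct and is essentially the paper's own proof: the paper simply says to repeat the proof of Lemma~\ref{Qapprox} with $\frac14R_{z_0}(u,\bar u,u,\bar u)k^{-1}$ replaced by $\sum_{j=2}^pa_j(u)k^{-j/2}$ and $k^{-3/2+\ep}$ by $k^{-(p+1)/2+\ep}$, and you have carried out exactly that substitution, correctly identifying the analogue of \eqref{C} (which holds since each $a_r$ has bounded degree and vanishes to second order, so $|a_r(u)|k^{-r/2}\le\frac{1}{2p}|u|^2$ for $|u|<b\sqrt{\log k}$ and $k$ large) as the only step needing re-verification.
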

\begin{proof} Repeat the proof of Lemma~\ref{Qapprox} with $\frac14R_{z_0}(u,\bar u,u,\bar u)k\inv$ replaced by  $\sum_{j=2}^pa_j(u)k^{-j/2}$ and with $k^{-3/2+\ep}$ replaced by $k^{-(p+1)/2+\ep}$.\end{proof}

To state the generalization of Lemma \ref{Gintlemma}, we use the following notation: For integers $j\ge 2$, we let $$\textstyle \pcal(j)= \left\{(r_2,\dots,r_p)\in\N^{p-1}:\sum_{\la=2}^p \la\,r_\la=j\right\},$$ where $\N$ denotes the non-negative integers.
For ${\bf r}= (r_2,\dots,r_p)\in\N^{p-1}$ and $n\in\Z_+$, we let
\beq\label{PI} {n \choose {\bf r}}=\frac{n(n-1)\cdots (n-|{\bf r}|+1)}{r_2!\cdots r_p!} =\left\{\begin{array}{cl}\frac{n!}{r_2!\cdots r_p!(n-|{\bf r}|)!} & \quad \mbox{for }\ n\ge |{\bf r}|\\ 0 & \quad \mbox{for }\ n\in\{0,1,\dots,|{\bf r}|-1\}\end{array}\right.\eeq denote the multinomial coefficients, where $|{\bf r}|=r_2+\cdots +r_p$. We note the polynomial identity
\beq\label{poly}\Big(1+\sum_{j=2}^pC_jx^j\Big)^n=1+\sum_{j=2}^{np}B_{pj}(C_2,\dots,C_p;n)x^j\,, 
\quad B_{pj}(C_2,\dots,C_p;n)=\sum_{{\bf r}\in\pcal(j)}{n \choose {\bf r}}\prod_{\la=2}^p C_{\la}^{r_\la}\,.\eeq

The following generalizes  Lemma \ref{Gintlemma}:
\begin{lem}\label{plemma} Let $F\in\ccal^0(\C^m)$ such that $F$ has polynomial growth. Then for $p\ge 2$,
\begin{multline}\label{Gintp}\int_{\C^m} G\left(e^{-|u|^2}\left[1+\sum_{j=2}^pa_j(u)k^{-j/2}\right]\right)F(u)\,\nu_u=\\ \frac 1{4\pi^2}\sum_{n=1}^\infty \frac1{n^2}\int_{\C^m} e^{-n|u|^2}\left[1+\sum_{j=2}^pb_{pj}(u,n)k^{-j/2}\right]F(u)\,\nu_u +O(k^{-p/2-1/4})\,,\end{multline}  
where \beq\label{bj} b_{pj}(u,n)=B_{pj}\left(a_2(u),\dots,a_p(u);n\right)\,.\eeq
\end{lem}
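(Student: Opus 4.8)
The plan is to expand $G$ via its power series \eqref{G} and to reorganize the resulting double sum with the polynomial identity \eqref{poly}, isolating the terms of order $\ge k^{-(p+1)/2}$ as the error. First I would record that, since the $a_\la$ are polynomials with $a_\la(0)=0$ and $da_\la(0)=0$ (Theorem~\ref{withK}), there is a $k_1$ with $1+\sum_{\la=2}^p|a_\la(u)|k^{-\la/2}\le e^{|u|^2/2}$ for $k\ge k_1$ and all $u\in\C^m$; this is the exact analogue of \eqref{k1} and guarantees that the argument of $G$ on the left of \eqref{Gintp} lies in $[-1,1]$ and that the left side is absolutely convergent by the estimate used in \eqref{Gintfinite}. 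With the argument in $[-1,1]$, \eqref{G} gives
\[
\int_{\C^m} G\!\left(e^{-|u|^2}\Big[1+\sum_{j=2}^pa_j k^{-j/2}\Big]\right)F\,\nu_u =\frac1{4\pi^2}\sum_{n=1}^\infty\frac1{n^2}\int_{\C^m}e^{-n|u|^2}\Big[1+\sum_{\la=2}^p a_\la k^{-\la/2}\Big]^n F\,\nu_u,
\]
and I would apply \eqref{poly} (with $C_\la=a_\la(u)$ and $x=k^{-1/2}$) to replace the $n$-th power by $1+\sum_{j=2}^{np}b_{pj}(u,n)k^{-j/2}$, with $b_{pj}$ as in \eqref{bj}.

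Next I would define $E$ to be the difference between this expression and the right-hand side of \eqref{Gintp}; thus $E$ is exactly the contribution of the tail $j\ge p+1$,
\[
E=\frac1{4\pi^2}\sum_{n=1}^\infty\frac1{n^2}\int_{\C^m}e^{-n|u|^2}\Big[\sum_{j=p+1}^{np}b_{pj}(u,n)k^{-j/2}\Big]F\,\nu_u,
\]
the finiteness of the retained part of the right-hand side (and hence of the whole right-hand side) following from \eqref{sim2} exactly as in Lemma~\ref{Gintlemma}. The heart of the matter is to show $|E|=O(k^{-p/2-1/4})$. Here I would use the elementary inequality
\[
-\tfrac j2+\tfrac p2+\tfrac14\ \le\ -\frac{j}{4(p+1)}\qquad(j\ge p+1),
\]
which holds with equality at $j=p+1$ and strict inequality beyond (both sides are affine in $j$, the left-hand side having the more negative slope). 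Writing $\eta=k^{-1/(4(p+1))}$, this gives $k^{-j/2+p/2+1/4}\le\eta^j$ for $j\ge p+1$, so after factoring $k^{-p/2-1/4}$ out of $E$ I can bound $|b_{pj}(u,n)|\le\sum_{{\bf r}\in\pcal(j)}{n\choose{\bf r}}\prod_{\la}|a_\la(u)|^{r_\la}$ and reassemble the tail, using $\eta^j=\prod_\la\eta^{\la r_\la}$ and the multinomial identity \eqref{poly} in reverse (with $C_\la=|a_\la(u)|\eta^\la$, $x=1$), into the clean majorant
\[
k^{p/2+1/4}\sum_{j=p+1}^{np}|b_{pj}(u,n)|\,k^{-j/2}\ \le\ \Big(1+\sum_{\la=2}^p|a_\la(u)|\eta^\la\Big)^{\!n}.
\]

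Summing over $n$ then converts the right-hand side back through \eqref{G} into $4\pi^2\,G\big(e^{-|u|^2}(1+\sum_\la|a_\la|\eta^\la)\big)$, so that
\[
k^{p/2+1/4}|E|\ \le\ \int_{\C^m}G\Big(e^{-|u|^2}\big(1+\sum_{\la=2}^p|a_\la(u)|\eta^\la\big)\Big)|F(u)|\,\nu_u.
\]
Since $\eta\to0$, for $k$ large the vanishing of $a_\la$ and $da_\la$ at $0$ yields $1+\sum_\la|a_\la|\eta^\la\le e^{|u|^2/2}$ uniformly in $u$, whence $G(e^{-|u|^2}(1+\cdots))\le G(e^{-|u|^2/2})=O(e^{-|u|^2/2})$ by $G(t)=O(t)$; as $F$ has polynomial growth this last integral is finite and bounded independently of $k$ (cf.\ \eqref{Gintfinite}). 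Therefore $|E|=O(k^{-p/2-1/4})$, which is \eqref{Gintp}; throughout, the interchanges of $\sum_n$, the finite inner sums, and $\int$ are justified by Tonelli together with the finiteness just established. I expect the main obstacle to be the bookkeeping in this resummation: one must choose the split of the exponent (the value $\eta=k^{-1/(4(p+1))}$, dictated by the inequality above) so that the divergent-looking tail collapses—via the multinomial identity—into a single convergent $G$-integral, rather than attempting to estimate the $O(np)$ terms of each $n$-th power individually.
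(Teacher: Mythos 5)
Your proposal is correct and follows essentially the same route as the paper's proof: expand $G$ by its power series, reorganize via the multinomial identity \eqref{poly}, isolate the tail $j\ge p+1$ as $E$, and control it using the inequality $p-j+\tfrac12\le -j/(2p+2)$ together with the majorization $|b_{pj}(u,n)|\le B_{pj}(|a_2(u)|,\dots,|a_p(u)|;n)$, resumming the tail into a single convergent $G$-integral (the paper phrases the exponent split as $k^{-j/(4p+4)}$ for $k\ge k_0^{2p+2}$ rather than via your $\eta=k^{-1/(4(p+1))}$, but this is only notational). The only cosmetic divergence is that the paper verifies finiteness of the right-hand side of \eqref{Gintp} through the bound \eqref{Bsum} rather than citing \eqref{sim2}, which is an equally valid alternative.
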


\begin{proof} Fix $p\ge 2$. Since the polynomials $a_j$ have no constant or linear terms, we can choose $k_0$ such that $$1+\sum_{j=2}^p|a_j(u)|k^{-j/2}\le e^{|u|^2/2}\,,\  \mbox{for } k\ge k_0,\ u\in\C^m.$$
Since $|G(t)|\le G(|t|)=O(|t|)$ for $|t|\le 1$, we then have \begin{multline}\int_{\C^m}\left| G\left(e^{-|u|^2}\left[1+\sum_{j=2}^pa_j(u)k^{-j/2}\right]\right)F(u)\right|\nu_u\\ \le \int_{\C^m} G\left(e^{-|u|^2}\left[1+\sum_{j=2}^p|a_j(u)|k^{-j/2}\right]\right)|F(u)|\,\nu_u \le \int_{\C^m} G\left(e^{-|u|^2/2}\right)|F(u)|\,\nu_u <\infty\label{GGG}\end{multline}
and thus the left side of \eqref{Gintp} is well-defined and finite, for $k\ge k_0$.

By \eqref{G} and \eqref{poly}, \begin{eqnarray} G\left(e^{-|u|^2}\left[1+\sum_{j=2}^pa_j(u)k^{-j/2}\right]\right)&=&\frac 1{4\pi^2}\sum_{n=1}^\infty \frac1{n^2} e^{-n|u|^2}\left[1+\sum_{j=2}^pa_j(u)k^{-j/2}\right]^n\notag\\&=&\frac 1{4\pi^2}\sum_{n=1}^\infty \frac1{n^2} e^{-n|u|^2}\left[1+\sum_{j=2}^{np}b_{pj}(u,n)k^{-j/2}\right],\label{bjsum}\end{eqnarray}for $k\ge k_0$, where $b_{pj}(u,n)$ is given by \eqref{bj}.

Furthermore, since ${n \choose {\bf r}}\ge 0$ for $n\in\Z_+$, we have  $$|b_{pj}(u,n)|\le  \sum_{{\bf r}\in\pcal(j)}{n \choose {\bf r}}\prod_{\la=2}^p |a_{\la}(u)|^{r_\la}= B_{pj}(|a_2(u)|,\dots,|a_p(u)|;n)\,, \quad \mbox{for }\ n\in\Z_+\,,$$
and thus $$1+\sum_{j=2}^{np}|b_{pj}(u,n)|k^{-j/2} \le 1+\sum_{j=2}^{np}B_{pj}(|a_2(u)|,\dots,|a_p(u)|;n)k^{-j/2}=\left[1+\sum_{j=2}^p|a_j(u)|k^{-j/2}\right]^n.$$ 
Hence by \eqref{GGG}, \begin{multline}\sum_{n=1}^\infty \int_{\C^m} \frac{e^{-n|u|^2}}{n^2}\left[1+\sum_{j=2}^{np}|b_{pj}(u,n)|k^{-j/2}\right]|F(u)|\nu_u \\ \le \sum_{n=1}^\infty \int_{\C^m} \frac{e^{-n|u|^2}}{n^2}\left[1+\sum_{j=2}^p|a_j(u)|k^{-j/2}\right]^n|F(u)|\, \nu_u\\ = 4\pi^2\int_{\C^m} G\left(e^{-|u|^2}\left[1+\sum_{j=2}^p|a_j(u)|k^{-j/2}\right]\right)|F(u)|\,\nu_u <\infty\,,\label{Bsum}
\end{multline} for $k\ge k_0$.  In particular, the right side of \eqref{Gintp} is also well-defined and finite.

As before, let \begin{eqnarray*}E&:=&\int_{\C^m} G\left(e^{-|u|^2}\left[1+\sum_{j=2}^pa_j(u)k^{-j/2}\right]\right)F(u)\,\nu_u\\ &&\qquad-\frac 1{4\pi^2}\sum_{n=1}^\infty \frac1{n^2}\int_{\C^m} e^{-n|u|^2}\left[1+\sum_{j=2}^pb_{pj}(u,n)k^{-j/2}\right]F(u)\,\nu_u\\&=&\frac 1{4\pi^2}\sum_{n=2}^\infty \frac1{n^2}\int_{\C^m} e^{-n|u|^2}\sum_{j=p+1}^{np}b_{pj}(u,n)k^{-j/2}F(u)\,\nu_u.\end{eqnarray*}

Therefore  $$|E| \le\frac 1{4\pi^2}k^{-p/2-1/4}\sum_{n=2}^\infty \frac1{n^2}\int_{\C^m} e^{-n|u|^2}\sum_{j=p+1}^{np}|b_{pj}(u,n)|\,k^{(p-j)/2+1/4}\,|F(u)|\,\nu_u\,.$$ Since $$p-j+\frac12\, \le\, \frac{-j}{2p+2}\quad\mbox{for }\ j\ge p+1\,,$$
$$  |E| \le 
\frac 1{4\pi^2}k^{-p/2-1/4}\sum_{n=2}^\infty \frac1{n^2}\int_{\C^m} e^{-n|u|^2}\sum_{j=p+1}^{np}|b_{pj}(u,n)|\,k^{-j/(4p+4)}\,|F(u)|\,\nu_u\,.$$ 
By \eqref{Bsum}, $$\sum_{n=2}^\infty \frac1{n^2}\int_{\C^m} e^{-n|u|^2}\sum_{j=p+1}^{np}|b_{pj}(u,n)|\,k^{-j/(4p+4)}\,|F(u)|\,\nu_u <\infty$$ for $k\ge k_0^{2p+2}$, and thus $|E|=O(k^{-p/2-1/4})$.\end{proof}

\begin{proof}[Continuation of the proof of Theorem~\ref{sharp}]
Fix $p\ge 2$ and let $b=\sqrt{m+p+1}$. Recalling \eqref{psi}, we have an expansion of the form
\begin{multline}\label{series} i\ddbar\psi\left(z_0+\frac u\sqrtk\right)=f\left(z_0+\frac u\sqrtk\right)\Om_M\left(z_0+\frac u\sqrtk\right)\\=\left[f(z_0)+2\Re\sum_{q=1}^mf_q(z_0)u^qk^{-1/2}+\sum_{j=2}^p\be_j(u)k^{-j/2}+O(k^{-p/2-1/2+\ep})\right]\frac 1{k^m}\nu_u\,,\end{multline} for $|u|\le b\sqrt{\log k}$, where $\be_j(u)$ is a homogeneous polynomial in $(u,\bar u)$ of degree $j$.

Repeating the derivation of \eqref{U1a} using \eqref{Iz}, \eqref{series} and Lemma~\ref{Qapproxp}, we obtain
\begin{multline} \ical_k(z_0) =\frac 1{k^m}\int_{\C^m}G\left(e^{-|u|^2}\left[1+\sum_{j=2}^pa_j(u)k^{-j/2}\right]\right)\\ \times \left[f(z_0)+2\Re\sum_{q=1}^mf_q(z_0)u^qk^{-1/2}+\sum_{j=2}^p\be_j(u)k^{-j/2}\right]\nu_u +O(k^{-m-p/2-1/4})\,.\label{U1p}\end{multline}
Here we set $\ep=1/4$ and again  used \eqref{decay}. By \eqref{U1p} and Lemma~\ref{plemma}, we then have 
\begin{multline}\ical_k(z_0)=  \frac 1 {4\pi^2k^m}\sum_{n=1}^\infty \frac1{n^2}\int_{\C^m} e^{-n|u|^2}\left[1+\sum_{j=2}^pb_{pj}(u,n)k^{-j/2}\right]\\ \times
\left[f(z_0)+2\Re\sum_{q=1}^mf_q(z_0)u^qk^{-1/2}+\sum_{j=2}^p\be_j(u)k^{-j/2}\right]\nu_u+O\left(\frac 1{k^{m+p/2+1/4}}\right).\label{U2p}\end{multline}
Furthermore by Theorem~\ref{withK} and \eqref{bj}, \beq\label{bjl} b_{pj}(u,n)=  \sum_{{\bf r}\in\pcal(j)}{n \choose {\bf r}}\prod_{\la=2}^p a_{\la}(u)^{r_\la}=\sum_{l=0}^{\lfloor j/2\rfloor}b_{pjl}(u)n^l\qquad \big({\bf r}=({r_2},\dots ,{r_p})\big)\,,\eeq where  $b_{pjl}(u)$ is a polynomial in $(u,\bar u)$ of degree $\le 5j$ and of the same parity as $j$. 
(The highest power of $n$ in $b_{pj}$ is  $\lfloor j/2\rfloor$ since  $j=2r_2+\cdots+pr_p\ge 2|{\bf r}|= 2\deg_{\Q[n]} {n \choose {\bf r}}$ for  ${\bf r}\in\pcal(j)$. However, this bound is not needed in the proof.)

Since the polynomials $a_\la(u)$ contain only terms of degree 2 or higher in $(u,\bar u)$, it follows that $\prod_{\la=2}^p a_{\la}(u)^{r_\la}$ contains only terms of degree $\ge2|{\bf r}|=2\deg_{\Q[n]}{n \choose {\bf r}}$. It then follows from \eqref{bjl} that $b_{pjl}(u)$ is of the form $$b_{pjl}(u)=\sum_{q=2l}^{5j}\!\raisebox{6pt}{$\prime$}b_{pjlq}(u)\,,$$ where $b_{pjlq}(u)$ is homogeneous of degree $q$ in $(u,\bar u)$, and $\sum\!\raisebox{4pt}{$\prime$}$ denotes the sum with $q\equiv j (2)$.
Thus \eqref{U2p} can be written in the form
\beq\ical_k(z_0)=  \frac 1 {4\pi^2k^m}\sum_{n=1}^\infty \int_{\C^m} \frac {e^{-n|u|^2}}{n^2}
\sum_{j=0}^{2p}\sum_{l=0}^{\lfloor j/2\rfloor}\sum_{q=2l}^{5j}\!\raisebox{6pt}{$\prime$}S_{pjlq}(u)n^lk^{-j/2}\;\nu_u+O\left(\frac 1{k^{m+p/2+1/4}}\right),\label{U3p}\eeq
where    $S_{pjlq}(u)$ is a homogeneous polynomial in $(u,\bar u)$ of degree $q$. (In particular, $S_{p000}(u)=f(z_0)$.)

As before, we make the change of variables $u=\frac 1{\sqrt n} v$ so that \eqref{U3p} becomes
\begin{eqnarray}\ical_k(z_0) &=&  \frac 1 {4\pi^2k^m}\sum_{n=1}^\infty \int_{\C^m} \sum_{j=0}^{2p}\sum_{l=0}^{\lfloor j/2\rfloor}\sum_{q=2l}^{5j}\!\raisebox{6pt}{$\prime$}\,\frac {e^{-|v|^2}S_{pjlq}(v)}{n^{2+m+q/2-l}}\,
k^{-j/2}\;\nu_v+O\left(\frac 1{k^{m+p/2+1/4}}\right)\notag\\ &=&  \frac 1 {4\pi^2k^m} \sum_{j=0}^{2p}
k^{-j/2}\sum_{l=0}^{\lfloor j/2\rfloor}\sum_{q=2l}^{5j}\!\raisebox{6pt}{$\prime$}\,\zeta(2+m+q/2-l)\int_{\C^m} {e^{-|v|^2}S_{pjlq}(v)}\;\nu_v\notag\\ &&\hspace{3.5in}+O\left(\frac 1{k^{m+p/2+1/4}}\right).\label{U4p}\end{eqnarray} 
Since the terms  of the sum in \eqref{U4p} with $q$ odd (in which $S_{pjlq}(v)$ has odd degree) vanish, and since $q$ and $j$ have the same parity in the sum,  \eqref{U4p} reduces to a sum over $q$ and $j$ even.  I.e., substituting $j=2\tilde j,\,q=2\tilde q$, we have
\beq\ical_k(z_0) = \frac 1 {4\pi^2k^m} \sum_{\tilde j=0}^{p}
k^{-\tilde j}\sum_{l=0}^{\tilde j}\sum_{\tilde q=l}^{5\tilde j}\zeta(2+m+\tilde q-l)\int_{\C^m} {e^{-|v|^2}S_{p(2\tilde j)l(2\tilde q)}(v)}\;\nu_v+O\left(\frac 1{k^{m+p/2+1/4}}\right).\label{U5p}\eeq
Finally, by \eqref{int1s} and \eqref{U5p} with $p$ replaced by $2p+2$, we obtain the asymptotic expansion \eqref{asympt} of Theorem \ref{sharp}. Proposition~\ref{sharp2} provides the values of the leading and sub-leading terms of the expansion.\end{proof}

\section{Remarks}

By further refining the local coordinate condition \eqref{Korder4}, one can obtain information about additional terms of the asymptotic expansion \eqref{asympt}.  I.e., for $n\ge 4$ one can choose holomorphic {\it Bochner coordinates\/} (also called {\it K-coordinates}) of order $n$ at $z_0$, which  satisfy: 
 
\begin{equation} \label{bochnerk}  \phi(z_0+z) = |z|^2 +\!\!\! \sum_{\begin{smallmatrix}|J|\ge 2, \; |K| \ge 2\\|J|+|K|\le n\end{smallmatrix}}\!\!\! a_{JK} z^J \bar z^K \;+\;O(|z|^{n+1})\,.\end{equation} 
See \cite{Bo,LS}. Equation \eqref{Korder4} describes Bochner coordinates of order 4. It was shown in \cite[Th.~2.8]{LS} that with Bochner coordinates of order $r+2$, the polynomials $p_r(u,v)$ of Theorem~\ref{near} are curvature invariants (and are of degree  $\le 2r$ instead of $5r$).   
It then follows by tracing through the proof of Theorem~\ref{sharp} in Section~\ref{theproof} that the coefficients $A_j$ in Theorem~\ref{sharp} are integrals involving   curvature invariants and $\ddbar\psi$.  To state this precisely, we introduce the following definition:
\begin{defin}  Let $M$ be  a Riemannian manifold and suppose that $f\in\ccal^\infty(M)$.  An\break  {\em $f$-curvature invariant of $M$} is a scalar field (smooth function) on $M$ that  is a  contraction of tensor products of $f$ and its derivatives and the curvature tensor of $M$ and its  derivatives.\end{defin}
The proof of Theorem~\ref{sharp}  yields the following result:

\begin{theo}\label{general}  Let $(L,h)\to (M,\om)$ be as in Theorem
\ref{sharp}. Suppose that $\psi\in \dcal^{m-1,m-1}_\R(M)$ and let $i\ddbar\psi=f\Om_M$, where $\Omega_M$ is the volume form on $M$. Then the coefficients $A_j$ of the asymptotic expansion \eqref{expansion} of $\var\big(Z_{s^k},\psi\big)$
are of the form $A_j=\int_M\acal_j\,\Omega_M$, where $\acal_j$ is a linear combination of $f$-curvature invariants of $M$.\end{theo}
Formulas for the polynomials  $p_3$ and $p_4$ of Theorem~\ref{near} were also given in \cite{LS} using Bochner coordinates, and these can be used together with \eqref{U2p} and \eqref{U5p} to obtain a formula for $A_2$. (The integral in \eqref{U5p} can be evaluated using the Wick formula, and $5\tilde j$ can be replaced by $2\tilde j$.)

In addition to the linear statistics studied here,  the following ``number statistics" are also of interest: For a domain $U\subset M$ with smooth boundary, we let $\ncal^U_k$ denote the number of simultaneous zeros in $U$ of  $m$ independent Gaussian holomorphic sections of $L^k\to M$. It was shown in \cite{SZa} that the variance of $\ncal^U_k$ has the asymptotics
\beq\label{number} \var\left(\ncal^U_k\right)=\nu_{mm}\mbox{Vol}(\d U)\,k^{m-\frac12}+O(k^{m-1+\ep})\,,\eeq
where $\nu_{mm}$ is a universal constant (given explicitly in \cite{SZa}). In particular, for dimension $m=1$, we have
\beq\label{number1} \var\left(\ncal^U_k\right)= \frac{\zeta(3/2)}{8\pi^{3/2}}\,\mbox{Length}(\d U)\,k^{1/2}+O(k^\ep)\,.\eeq
The analogy with linear statistics leads to the conjecture that $\var(\ncal^U_k)$ has an asymptotic expansion.  In dimension 1, an asymptotic expansion should follow by the methods of this paper.  The higher dimensional case requires a more complicated analysis.

\end{document}